\newtheorem{theorem}{Theorem}[section]
\newtheorem*{theorem*}{Theorem}
\newtheorem{remark}[theorem]{Remark}
\newtheorem{example}[theorem]{Example}
\newtheorem{lemma}[theorem]{Lemma}
\newtheorem{proposition}[theorem]{Proposition}
\newtheorem{corollary}[theorem]{Corollary}
\newtheorem*{corollary*}{Corollary}
\numberwithin{equation}{section}
\DeclareMathOperator{\Tr}{Tr}
\DeclareMathOperator{\diag}{diag}
\newcommand{\ad}{\mathrm{ad}}
\newcommand{\Ad}{\mathrm{Ad}}
\newcommand{\Adstar}{\Ad^{\star}}
\newcommand{\adstar}{\ad^{\star}}
\newcommand{\id}{\mathrm{id}}
\newcommand{\Id}{\mathrm{Id}}
\newcommand{\so}{\mathfrak{so}}
\newcommand{\g}{\mathfrak{g}}
\newcommand{\h}{\mathfrak{h}}
\newcommand{\hperp}{\mathfrak{h}^{\perp}}
\newcommand{\verproj}{\mathscr{V}}
\newcommand{\horproj}{\mathscr{H}}
\newcommand{\consty}{D}
\newcommand{\bim}{\kappa}
\newcommand{\subby}{\Omega}
\newcommand{\simpkill}{\kappa}
\newcommand{\Syl}{S}
\newcommand{\eulerop}{F}
\newcommand{\adop}{A}
\newcommand{\adlambop}{B}
\newcommand{\mattyone}{X_1}
\newcommand{\mattytwo}{X_2}
\newcommand{\unit}{E}
\newcommand{\son}{e}
\newcommand{\heis}{e}
\author[1]{Alice Le Brigant} 
\author[2]{Leandro Lichtenfelz}
\author[3]{Stephen C. Preston}
\address{A. Le Brigant: Paris 1 Panthéon-Sorbonne University, France}
\email{alice.le-brigant@univ-paris1.fr}
\address{L. Lichtenfelz: Wake Forest University, USA}
\email{lichtel@wfu.edu}
\address{S.C. Preston: Brooklyn College and CUNY Graduate Center, USA}
\email{stephen.preston@brooklyn.cuny.edu}
\date{}
\begin{document}
 
\title{Conjugate points on Lie groups with left-invariant metrics}

\begin{abstract}
We prove sufficient conditions for the existence of conjugate points along geodesics of a left-invariant metric on a Lie group, using a reformulation of the index form in terms of the adjoint action. In the compact semisimple case, with an arbitrary left-invariant metric, we show that all geodesics must have a conjugate point, and we give upper and lower bounds on conjugate times. In particular this applies to the left-invariant metrics on $SU(n)$ and $SO(n)$ which are of importance in fluid dynamics and rigid body motion, and yields estimates for the diameter and injectivity radius. We also establish criteria in the noncompact case: we show that every closed nonhomogeneous geodesic has a conjugate point, and determine explicit conditions for them in the three-dimensional unimodular case. For homogeneous geodesics, we relate conjugate points to Lagrangian stability, and Eulerian stability of the corresponding steady velocity. Finally, we obtain as by-products criteria for conjugate points in general homogeneous spaces, by lifting the problem to the total Lie group of the quotient and using a result of O'Neill. Through several examples, we show that our theorems apply when well-known criteria relying on positive Ricci curvature or other curvature bounds fail, and in some cases even when Ricci curvature is negative in all directions. 
\end{abstract}

\maketitle

\textbf{MSC2020:} 53C30, 58E10, 70G65
\setcounter{tocdepth}{1}

\tableofcontents

\section{Introduction}

One of the most basic questions in Riemannian geometry concerns the length-minimization properties of geodesics. In particular, the presence of conjugate points indicates a geodesic’s failure to minimize length past some time, and their location can yield a bound for the diameter or the injectivity radius. When the manifold represents the configuration space of some physical object, such as a rigid body or a fluid, the study of conjugate points sheds light on the way the trajectories of these objects behave under perturbation of their initial conditions. Lie groups, and the homogeneous spaces obtained as their quotients, provide convenient models for such configuration spaces. 
Classical examples include the kinetic energy metric on $SO(n)$ describing the motion of a free rigid body in $n$-dimensional space \cite{Frahm}, and Zeitlin's finite-dimensional model of spherical hydrodynamics on $SU(n)$ ~\cite{Zeitlin}. 

A conjugate point occurs along a geodesic when there exists a Jacobi field that vanishes at both end points. In certain special homogeneous spaces, such as symmetric, normal or naturally reductive spaces, the Jacobi equation can be reduced to a constant coefficient equation and the problem becomes tractable \cite{Chavel, Chavel2, Chavel3, herring, Ziller, Ziller2}. However, the Jacobi equation is generally impossible to solve explicitly, due to the fact that along a nonhomogeneous geodesic the coefficients are nonconstant.  
Another strategy is to prove the existence of a vanishing Jacobi field without actually having to find it, by exhibiting a perturbation along the geodesic for which the index form is negative. This is the strategy behind the Misio{\l}ek criterion, a sufficient condition for the existence of a perturbation that makes the index form negative after some large enough time, initially introduced in the infinite-dimensional setting of hydrodynamics~\cite{M1}. Although it can be readily generalized to any Lie group, this criterion is not exhaustive and has been shown to fail to detect conjugate points in some settings \cite{lbpreston}.

It is easy to see from its definition that the index form cannot be made negative if the curvature tensor is everywhere nonpositive, so that there needs to be at least \emph{some} positive sectional curvature for conjugate points to appear. A result of Gromoll and Meyer \cite{GM} states that if the sectional curvatures are positive \emph{on average}, meaning that the Ricci curvature is nonnegative along the geodesic and positive on at least one point, then conjugate points necessarily occur. On a Lie group, this makes the existence of conjugate points trivial for a bi-invariant metric, due to its nonpositive sectional curvature. With only one-sided invariance on the other hand, as illustrated in the well-known paper of Milnor~\cite{Milnor}, the curvature typically takes on both signs. 

In this paper, we establish several new sufficient criteria for the existence of conjugate points in a Lie group with a left-invariant metric. Using a result from O'Neill on Riemannian submersions, we obtain as by-products criteria for conjugate points in general homogeneous spaces, without assuming normality or natural reductiveness. Taking advantage of the natural splitting of the geodesic equation--into the flow equation and the Euler-Arnold equation--as well as the splitting of the Jacobi equation, we prove criteria that do not rely on standard curvature assumptions, and detect conjugate points when other known criteria fail. In particular, we show that conjugate points can appear even when the negative sectional curvature outweighs, in the sense of Ricci, the positive sectional curvature along the geodesic, and even more surprisingly, when the Ricci curvature is negative in \emph{all} directions. When useful, we distinguish between homogeneous geodesics, corresponding to steady solutions of the Euler-Arnold equation, and nonhomogeneous geodesics, whose left-translated velocity at the origin is nonconstant in time.

\subsection{Main results}
Here we list the contributions of this paper. 

The primary new tool we use in this paper is a reformulation of the index form in Lemma \ref{lemma_index}, which involves not curvature but only the group adjoint $\Ad_{\gamma(t)}$ and the Lie bracket. This enables us to reduce most questions about conjugate points to structural questions about the Lie group itself. Thus, the simplest cases are the ones in which the group adjoint happens to be uniformly bounded, for example on a compact group or along a homogeneous geodesic. Our main result gives a condition for the existence of conjugate points under this assumption.  

\begin{theorem*}
Let $G$ be a Lie group with left-invariant metric, $u_0\in\g$ and $\gamma(t)$ be the geodesic in $G$ with initial conditions $\gamma(0)=\id$ and $\gamma'(0)=u_0$. If $u_0$ is not orthogonal to $[\mathfrak{g},\mathfrak{g}]$ and $\Ad_{\gamma(t)}$ is uniformly bounded below as an operator on $\mathfrak{g}$, then there is a conjugate point along the geodesic $\gamma$. Conversely,  if $u_0$ is orthogonal to $[\mathfrak{g},\mathfrak{g}]$, then there is no conjugate point along $\gamma$. 
\end{theorem*}
When the Lie group admits a bi-invariant metric, we obtain the following corollary.  
\begin{corollary*}
If the left-invariant metric is obtained from a bi-invariant metric $\bim(\cdot,\cdot)$ and a positive-definite symmetric inertia operator $\Lambda\colon\mathfrak{g}\to\mathfrak{g}$ as $\langle u,v\rangle = \bim(u,\Lambda v)$ for all $u,v\in\g$, then the geodesic $\gamma$ starting at identity with initial velocity $u_0$ has a conjugate point if and only if $\Lambda u_0$ is not in the center of $\mathfrak{g}$. Furthermore, if $\gamma$ has a conjugate point then the conjugate time $\tau$ satisfies
$$\frac{2\pi \lambda_m}{\lVert \ad_{\Lambda u_0}\rVert_{\text{op}}} \leq  \tau \leq \frac{2\pi \lambda_M}{\lVert \ad_{\Lambda u_0}\rVert_{\text{op}}},$$
where $\lambda_m$ and $\lambda_M$ are the smallest and largest eigenvalues of $\Lambda$, respectively, and the operator norms are computed using $\bim(\cdot, \cdot)$.
\end{corollary*}
A consequence of the previous result is that in compact nonabelian Lie groups, conjugate points occur for \emph{any} left- or right-invariant metric, and for \emph{any} geodesic as well if the Lie group is semisimple. This applies in particular to $SO(n)$, the configuration space of rigid bodies, and $SU(n)$, Zeitlin's model space for spherical hydrodynamics. Through explicit examples, we show that our criterion detects conjugate points even in cases where the Ricci curvature is negative along the entire geodesic or the Misio{\l}ek criterion fails; and that it can be used to obtain bounds on the diameter (see Section~\ref{rigid-body}). On $SU(n)$, we obtain that the conjugate locus of the Zeitlin model cannot remain uniformly bounded as $n\to\infty$ (see Section~\ref{zeitlin}). Using a result from O’Neill on Riemannian submersions, we obtain an analogous result on homogeneous spaces. 
\begin{corollary*}
If $G$ is a compact semisimple Lie group with any left-invariant metric and $H$ is any closed subgroup, then every geodesic in the homogeneous space $G/H$ develops a conjugate point.
\end{corollary*}
 
In the absence of bounded adjoint action, we distinguish between different types of geodesics and different geometries to obtain our results. We first consider nonhomogeneous geodesics, i.e. geodesics whose Eulerian velocity is a nonsteady solution of the Euler-Arnold equation, and prove the following theorem in case the geodesic happens to be closed.

\begin{theorem*}
Let $G$ be a Lie group with a left-invariant metric, and $H=\mathrm{Iso}(G)$ the isotropy group of $G$ under this metric. Then in $G$ and in the homogeneous space $G/H$,  every closed nonhomogeneous geodesic has a conjugate point. In particular,  if $G/H$ is nonpositively curved, then every closed geodesic must lift to a homogeneous horizontal geodesic in $G$. 
\end{theorem*}
As a consequence, we derive the following corollary for Lie groups and homogeneous spaces with dense closed geodesics. This result also gives an alternative proof of the known fact that the only compact Lie group with nonpositive sectional curvature is the abelian flat torus (see Remark~\ref{torus}). 

\begin{corollary*} 
If a Lie group $G$ with left-invariant metric has dense closed geodesics, then either it has positive curvature in some section at the identity, or it is abelian and flat. If $H=\mathrm{Iso}(G)$ is the isotropy group of $G$ under its left-invariant metric, and if the homogeneous space $M=G/H$ has dense closed geodesics, then it must be naturally reductive. 
\end{corollary*}

In the special case of a 3D unimodular Lie group, using the technique of Milnor~\cite{Milnor}, we find a simple expression for the index form based on a natural orthogonal basis, and deduce a criterion for conjugate points to appear along a nonhomegeneous geodesic. The same technique can be generalized to any  
Lie group with a nondegenerate bilinear form in arbitrary dimension, but the construction is still essentially three-dimensional and produces a sufficient but not necessary condition for conjugate points. 
\begin{theorem*}
Let $G$ be a three-dimensional unimodular Lie group $G$ with a left-invariant metric, and $L\colon \mathfrak{g}\to\mathfrak{g}$ a self-adjoint operator such that $[u,v] = L(u\times v)$ for all $u,v\in \mathfrak{g}$, where $\times$ is the usual three-dimensional cross product on $\mathfrak{g}\cong \mathbb{R}^3$. Let $\gamma(t)$ be a geodesic in $G$ starting at identity with Eulerian velocity $u(t)=\gamma^{-1}(t)\gamma'(t)$,  
and denote
$$\delta:=|u'|^2,\quad \xi := \delta^{-2} \langle u\times Lu, u\times L^2u - L(u\times Lu)\rangle.$$ 
If $\xi(t)\le 0$ for all $t$ then there are no conjugate points along $\gamma$, while if both $\delta(t)$ and $\xi(t)$ are bounded below by positive constants $\delta_0$ and $\xi_0$ respectively, then there is a conjugate point no later than $\tau := 2\pi/\sqrt{\ell \xi_0\delta_0}$. 
\end{theorem*}
For homogeneous geodesics, we discuss how to solve the constant-coefficient Jacobi equation (Proposition~\ref{steadytheorem}), in particular in the case of semisimple Lie groups (Lemma~\ref{sylvesterregular}).  We then show that for a steady solution $u_0$, the corresponding homogeneous geodesic develops conjugate points if $\ad_{u_0}$ has a nice eigenvalue decomposition. This relates conjugate points to linear stability, both in the Eulerian and Lagrangian sense (see Section~\ref{sec:background-lie-groups}), which partially addresses a conjecture raised in \cite{DMSY} and \cite{TauchiYoneda}.
\begin{theorem*}
    Let $u_0$ be a regular semisimple element of a semisimple Lie algebra. 
    Suppose that $\Lambda u_0$ is also a regular element that commutes with $u_0$, and that $\ad_{u_0}^2$ has a nonzero real eigenvalue with eigenspace $V$. Suppose in addition that $\Lambda$ maps $V$ to itself.
    If $\ad_{u_0}$ has purely imaginary eigenvalues on $V$, then there is a conjugate point along the homogeneous geodesic $\gamma(t) = \exp{(tu_0)}$. If $\ad_{u_0}$ has purely real eigenvalues on $V$, then there is a conjugate point along the geodesic if and only if $u_0$ is stable in the Eulerian sense under perturbations in $V$.
\end{theorem*}
As an illustration, we give an example of a left-invariant metric on $SL(3)$ for which Ricci is \emph{strictly negative} in all directions, but some geodesics still have conjugate points (Example~\ref{sl3-example}).

\subsection{Outline of the paper} In Section~\ref{sec:background}, we start by providing background material on left-invariant metrics on Lie groups, homogeneous spaces, and conjugate points in these settings. Section~\ref{sec:compact} contains our main result for Lie groups with bounded adjoint action, and its corollary on compact Lie groups and homogeneous spaces. Section~\ref{sec:noncompact} is devoted to Lie groups where the adjoint action may be unbounded.

\begin{table}[h!]
\begin{tabular}{r|l}
\hline
$G$ and $H$ & Lie groups with $H\subset G$ \\
$\mathfrak{g}$ and $\mathfrak{h}$ & Lie algebras of $G$ and $H$ \\
$M$ & Riemannian manifold, typically a homogeneous space $G/H$ \\
$\langle\cdot, \cdot\rangle$ & left-invariant positive-definite metric on $\mathfrak{g}$ \\
$\kappa(\cdot,\cdot)$ & nondegenerate bi-invariant form on $\mathfrak{g}$, not necessarily positive-definite \\  
$\Lambda$ and $\lambda$ & inertia operator giving $\langle\cdot,\cdot\rangle$ in terms of $\kappa$, if $\kappa$ exists, and its eigenvalues \\
$L_g$ and $R_g$ & left- and right-translations by $g\in G$ \\
$\ad_u$ and $\Ad_g$ & standard adjoint actions on $\mathfrak{g}$ \\
$\adstar_u$ and $\Adstar_g$ & transposes of the actions under $\langle\cdot,\cdot\rangle$ \\
$u(t)$ and $u_0$ & solution of the Euler-Arnold equation and its initial condition \\
$\gamma(t)$ & geodesic in $G$ satisfying $\gamma'(t) = DL_{\gamma(t)}u(t)$ \\
$Y(t)$ and $y(t)$ & vector field along a geodesic $\gamma(t)$ and its left-translated version in $\mathfrak{g}$ \\
$I(Y,Y)$ or $I(y,y)$ & index form of a vector field $Y$ along a geodesic, or in terms of $y$ \\
$\unit_{ij}$ & square matrix with $1$ in row $i$ and column $j$, and $0$ elsewhere \\
$\heis_i$ or $\son_{ij}$ & basis vectors for Lie algebras in general, and in Section \ref{rigid-body}\\
$L$ & operator satisfying $[u,v] = L(u\times v)$ on a 3D Lie algebra in Section \ref{milnornonsteadythm} \\
$\delta$, $\xi$, $\ell$ & quantities relevant for 3D unimodular groups defined in Lemma \ref{orthogonallemma} \\
$\adop$ & adjoint operator $\ad_{u_0}$ for a fixed steady $u_0$ in Section \ref{steadygeneral} \\
$\adlambop$ & the operator $\ad_{\Lambda u_0}$, in case $\Lambda$ generates the metric \\
$\eulerop$ &  operator appearing in the linearized Euler equation around a steady $u_0$ \\ 
$\Syl$  & solution of the Sylvester equation for studying homogeneous geodesics\\
\hline
\end{tabular}
\caption{Summary of the main notations.}
\end{table}

\subsection{Acknowledgements:} We thank Renato Ghini Bettiol and Wolfgang Ziller for very helpful discussions during the preparation of this paper.

\section{Background}\label{sec:background}

\subsection{General left-invariant metrics}\label{sec:background-lie-groups}

We begin by recalling standard facts about left-invariant metrics on Lie groups, and introduce conjugate points in this setting (for background material, see \cite{lietext}, \cite{Knapp}, \cite{Helgason}). Let $G$ be a finite-dimensional Lie group. An inner product $\langle\cdot,\cdot\rangle$ on its Lie algebra $\g$ induces a left-invariant Riemannian metric on $G$ by setting, for any $g \in G$ and $x,y \in T_g G$,
$$\langle x,y\rangle_g := \langle u,v\rangle,\quad \text{where}\quad x=DL_g u, \,\, y= DL_gv, \,\, u,v\in\g,$$
and $L_g$ denotes left-translation by $g\in G$. A curve $\gamma$ in $G$ is a geodesic with respect to this metric if and only if
\begin{equation}\label{eulerarnold}
\gamma'(t) = DL_{\gamma(t)} u(t), \qquad u'(t) = \adstar_{u(t)}u(t),
\end{equation}
where the first equation expresses $u(t)$ as the left-translated \emph{Eulerian} velocity of $\gamma$, and the second is the \emph{Euler–Arnold equation}. The operator $\adstar$ is defined by $\langle \adstar_u v, w \rangle = \langle v, \ad_u w \rangle$ for all $u,v,w \in \mathfrak{g}$. The Euler-Arnold equation admits an equivalent formulation as a conservation law:
\begin{equation}\label{eulerarnold_conservation}
\frac{d}{dt} \left( \Adstar_{\gamma(t)^{-1}} u(t) \right) = 0,
\end{equation}
where $\Adstar_g\colon \mathfrak{g} \to \mathfrak{g}$ is defined by $\langle \Adstar_g u, v \rangle = \langle u, \Ad_g v \rangle$ for all $u,v \in \mathfrak{g}$.
If $\adstar_{u_0} u_0 = 0$, the unique solution of \eqref{eulerarnold} with $u(0) = u_0$ is constant: $u(t) = u_0$ for all $t$. In this case, $u_0$ is called a \emph{steady} solution of the Euler–Arnold equation and the corresponding geodesic $\gamma(t)$, which is a one-parameter subgroup of $G$, is called a \emph{homogeneous geodesic}. If instead $\adstar_{u_0} u_0 \neq 0$, then $u'(t) \ne 0$ for all $t$ by uniqueness of solutions to ODEs, and we call $u(t)$ a \emph{nonsteady} solution and the corresponding geodesic is said to be \emph{nonhomogeneous}.

A \emph{conjugate point} along a geodesic $\gamma$ occurs at time $\tau$ if there exists a nontrivial Jacobi field along $\gamma$ that vanishes at $t=0$ and $t=\tau$. Geometrically, this means that $\gamma$ lies in a one-parameter family of geodesics that all start at $\gamma(0)$ and reconverge (to first order) at $\gamma(\tau)$. In a Lie group, the Jacobi equation reduces to a pair of coupled equations in the Lie algebra obtained by differentiating the geodesic equations \ref{eulerarnold} with respect to a variation parameter. Thus a Jacobi field along $\gamma$ has the form $J(t) = DL_{\gamma(t)}y(t)$, where $y(t)$ satisfies
\begin{equation}\label{jacobiequation}
y'(t) + \ad_{u(t)}y(t) = z(t), \qquad
z'(t) = \adstar_{u(t)}z(t) + \adstar_{z(t)}u(t).
\end{equation}  
The point $\gamma(\tau)$ is conjugate to $\gamma(0)$ if and only if there exists a nontrivial solution $y$ of \eqref{jacobiequation} such that $y(0) = y(\tau) = 0$. Studying the solutions of the Jacobi equation also leads to a notion of stability of the geodesics: the Eulerian velocity $u(t)$ is called \emph{linearly stable} if every solution $z(t)$ of the second equation of~\eqref{jacobiequation}
remains bounded for all time. It is called linearly stable in the Lagrangian sense if in addition all solutions $y(t)$ of the system with $y(0)=0$ remain bounded for all time. The splitting implies that Lagrangian instability can arise either through Eulerian instability or through a purely Lagrangian mechanism, corresponding to unboundedness of the adjoint action $\Ad_{\gamma(t)}$. As we will see in Theorem~\ref{cartan_thm}, the existence of conjugate points along steady flows is closely related to both notions of stability, as conjectured by Drivas et al.~\cite{DMSY} and Tauchi-Yoneda~\cite{TauchiYoneda} in the infinite-dimensional case.

Conjugate points are also related to curvature: they can only appear in the presence of some positive sectional curvature along the geodesic. This can easily be seen by considering the index form, defined for vector fields $Y(t)$ along $\gamma(t)$ that vanish at $t=0$ and $t=\tau$ by 
\begin{equation}\label{generalindexform}
I(Y,Y) = \int_0^\tau \Big\langle \frac{DY}{dt}, \frac{DY}{dt}\Big\rangle - \big\langle R\big(Y, \gamma'\big)\gamma', Y\big\rangle \, dt,
\end{equation}
where $R$ is the curvature tensor and $\frac{D}{dt}$ is the covariant derivative. A standard fact in Riemannian geometry~\cite{doCarmo} is that $I(Y, Y) < 0$ for some $Y$ vanishing at $t = 0$ and $t = \tau$ if and only if there exists a Jacobi field $J(t)$ which vanishes at time $t=0$ and at some time $t_0$ with $0<t_0<\tau$. In particular, if the sectional curvature is everywhere nonpositive, then the integrand in \eqref{generalindexform} is nonnegative and no conjugate points can occur. In a Lie group with left-invariant metric, the index form can be simplified significantly, as we will show in Lemma \ref{lemma_index}.

\subsection{Homogeneous spaces}\label{homogeneoussection}
 
Given a Lie group $G$ and a compact\footnote{One could start with a noncompact $H$, but the conditions below can only be satisfied by compact groups, by the Myers-Steenrod Theorem.} subgroup $H$, the quotient space $M=G/H$ is the set of left cosets. If $G$ is equipped with a left-invariant metric, then we define $\hperp$ to be the orthogonal complement of the Lie subalgebra $\mathfrak{h}\subset \mathfrak{g}$, and $\hperp$ may be identified with the tangent space $T_{[H]}M$, where $[H]$ denotes the coset of the identity element in $M$. We consider the following notions of compatibility between this subgroup and the metric:
\begingroup
\renewcommand\theequation{C\arabic{equation}}
\setcounter{equation}{0}
\begin{align}
    \forall v\in \mathfrak{h}, \ad_v \text{ maps $\hperp$ to itself and is antisymmetric on $\hperp$.} \label{condition1} \\
    \forall v\in\mathfrak{h}, \ad_v \text{ is antisymmetric on  $\mathfrak{g}$.} \label{condition2} \\ 
    \forall u\in\mathfrak{g}, \ad_u \text{ is antisymmetric on  $\mathfrak{g}$.} \label{condition3} 
\end{align}
\endgroup

Condition~\eqref{condition1} is the necessary and sufficient condition for the left-invariant metric to descend to a Riemannian metric on $M$, which makes the projection $\pi\colon G\to M$ a Riemannian submersion, and the left action of the group $H$ on $M$ will consist of isometries. (Note that since $M$ is the set of left cosets, the right action of $H$ on $M$ is trivial, but the left action is not.)
In this case we have a \emph{Riemannian homogeneous space}. 

Condition~\eqref{condition2} implies Condition~\eqref{condition1}, since $\ad_v$ maps $\mathfrak{h}$ to itself, and antisymmetry of $\ad_v$ on all of $\mathfrak{g}$ implies it must also map $\hperp$ to itself. Condition~\eqref{condition2} is equivalent to saying that both left- and right-translations by elements of $H$ are isometries of the metric on $G$, and in particular the inherited metric on $H$ itself is actually bi-invariant. In most examples we have in mind, such as the Zeitlin model on $SU(N)$, this condition is satisfied. Note that if only the desired Riemannian metric on $M$ is known, with $G$ acting on $M$ by left-translations that are isometries, we may reconstruct a left-invariant metric on $G$ that gives this Riemannian submersion structure by specifying an arbitrary left-invariant metric on $H$, and defining the inner product on $\mathfrak{g}$ to be the direct sum of the inner products on $\mathfrak{h}$ and $\hperp$. Any such choice will lead to the same metric on $M$. Thus since $H$ is compact, we may as well use the bi-invariant metric to construct the metric on $G$. In particular if we start with a homogeneous Riemannian metric on $M$ and have freedom to choose the metric on $G$, we can assume it satisfies \eqref{condition2}.

Condition \eqref{condition3} is equivalent to saying the metric on $G$ is bi-invariant, and implies the previous two conditions. The resulting homogeneous space is called a \emph{normal homogeneous space} or \emph{standard homogeneous space}. A Lie group which admits a bi-invariant metric must be the direct product of a compact group and an abelian group (see \cite{Milnor}), and for a compact semisimple group, the bi-invariant metric is unique up to a constant factor. Hence the Riemannian geometry of a normal Riemannian homogeneous space arising from a compact semisimple group $G$ is completely determined by $G$ and its subgroup $H$. This is too restrictive an assumption for us, since we are most interested in geometries arising from physics, such as the Zeitlin model for ideal fluids and the motion of rigid bodies in $\mathbb{R}^n$ with possibly unequal axes of inertia. 
See Chavel~\cite{Chavel2} for a description of the conjugate points on normal homogeneous spaces. 

In what follows, we assume only \eqref{condition1}, except in Corollary~\ref{homogeneousclosed}. The index form on a Riemannian homogeneous space can be simplified using O'Neill's formula (originally proved for any Riemannian submersion). Note that any vector field along a curve in the group $G$ can be viewed as the restriction of a time-dependent left-invariant vector field on the entire group. 
\begin{theorem}[O'Neill~\cite{oneillconjugate}]\label{oneillhomogeneousindex}
Suppose $M=G/H$ is a Riemannian homogeneous space and let $\gamma\colon [0,\tau]\to G$ be a horizontal geodesic with time-dependent Eulerian velocity field $u(t)$. Then for any time-dependent left-invariant vector field $Y=\verproj Y+\horproj Y$ along $\gamma$ with horizontal part $\horproj Y$ vanishing at the endpoints, let $Y_*=\pi_*(Y)=\pi_*(\horproj Y)$ be the corresponding vector field along $\pi\circ\gamma$ in $M$, noting that $Y_*$ vanishes at the endpoints.  Then the index forms of $G$ and $M$ are related by 
\begin{equation}\label{eq_oneill}
    I_G(Y,Y) = I_M(Y_*,Y_*) + \int_0^{\tau} \lvert \verproj Y'+\verproj[u,\horproj Y]\rvert^2 \, dt.
\end{equation}
\end{theorem}
The version of formula \eqref{eq_oneill} given in \cite{oneillconjugate} differs only in the squared term, which can be simplified by checking that \eqref{condition1} implies $\adstar_\h\h^\perp\subset\h^\perp$ and $\adstar_{\h^\perp} \h\subset\h^\perp$. We can use this theorem to show that conjugate points along horizontal geodesics on the group $G$ descend to the quotient space $G/H$.

\begin{corollary}\label{homogeneousconjugatepointscor}
    Let $M=G/H$ be a Riemannian homogeneous space and  $\gamma\colon [0,\tau]\to G$ be a geodesic with $\gamma'(0)$ horizontal. Suppose there is a conjugate point along $\gamma$ in $G$ at time $\tau$. Then the geodesic $\pi\circ\gamma$ in $M$ also has a conjugate point at some time $\tau'\le \tau$. 
\end{corollary}

The proof follows by projecting a Jacobi field $J$ along the lifted geodesic in $G$ to the base space $M = G/H$, and using the Jacobi equations \ref{jacobiequation} to see that if $\pi_*J$ is vertical, then $J$ must be trivial.

\subsection{Known criteria for the existence of conjugate points}\label{sec:background-conjugate-criteria}

Since our goal in this paper is to present new criteria for the existence of conjugate points, here we revisit two well-known methods: the Gromoll–Meyer criterion and the Misio{\l}ek criterion.  

The \emph{Gromoll-Meyer criterion} states that if Ricci curvature is always nonnegative along a geodesic $\gamma(t)$ and strictly positive for at least some time $t = t_0$, then $\gamma$ eventually develops conjugate points (see \cite{GM}). This is true on any (complete) Riemannian manifold $M$. In the sequel, we establish new criteria that do not rely on curvature assumptions. In fact, in many of our examples, Ricci curvature can take on negative values in some or even all directions (see Section~\ref{rigid-body} and Example~\ref{sl3-example}).

The \emph{Misio{\l}ek criterion}, which first arose in the context of geometric hydrodynamics \cite{Benn, DMSY,M1,P1}, can be readily generalized to any Lie group $G$ as follows~\cite{tauchiyonedapositivity}.
\begin{proposition}[Misio{\l}ek criterion]
Let $\gamma(t)$ be a homogeneous geodesic with initial velocity $u_0\in\g$. If 
\begin{equation}\label{eq_m_curvature}
\big\langle\ad_vu_0 + \ad^*_vu_0, \ad_vu_0\big\rangle < 0 \qquad \text{for some $v\in\mathfrak{g}$},
\end{equation}
then $\gamma(t)$ eventually develops conjugate points. 
\end{proposition} 

Notice that the Misio{\l}ek criterion \eqref{eq_m_curvature} is also a condition to get positive curvature on a plane containing $u_0$. This can be seen from Arnold's curvature formula (see \cite{AK}), reformulated as in \cite{leepreston},
\begin{equation*} 
\big\langle R(u,v)v,u\big\rangle = \frac{1}{4} \lvert\adstar_uv+\adstar_vu+\ad_vu\rvert^2 
    - \langle\ad_vu + \adstar_vu, \ad_vu\rangle - \langle\adstar_uu, \adstar_vv\rangle.
\end{equation*}  
Using the fact that $u_0$ is a steady solution of \eqref{eulerarnold}, so that $\adstar_{u_0}u_0=0$, we see that \eqref{eq_m_curvature} directly implies positive curvature on the $2$-plane spanned by $u_0$ and $v$. We shall later see that the criteria developed here can detect conjugate points in cases where the Misio{\l}ek criterion fails (see Remark~\ref{rem-Misiolek}).

\section{Lie groups with bounded adjoint action}\label{sec:compact}

\subsection{Main theorem}

We begin this section with a lemma showing how the index form can be simplified on a general Lie group. Following that, we present our main theorem and derive some consequences for Lie groups that admit a bi-invariant metric. An important consequence is that conjugate points occur for every left- or right-invariant metric on any nonabelian compact Lie group, and along every geodesic if the group is semisimple as well. The special cases of  
$SO(n)$ and $SU(n)$ 
are of particular interest, since these are configuration spaces for rigid bodies and fluids.

\begin{lemma}\label{lemma_index}
Let $\gamma(t)$ be a geodesic on a Lie group $G$ with a left-invariant metric, and let $Y(t)$ be a vector field along $\gamma(t)$ vanishing at $t=0$ and $t=\tau$. Let $u(t)=DL_{\gamma(t)^{-1}}\gamma'(t)$ and $y(t)=DL_{\gamma(t)^{-1}}Y(t)$ be the left translations of $\gamma'$ and $Y$ to the Lie algebra. Then the index form is given by
    \begin{equation}\label{indexform}
    I(Y,Y) = \int_0^{\tau} \big\langle z(t),z(t)\rangle - \big\langle [y(t),z(t)], u(t)\big\rangle \, dt, \qquad z(t):=y'(t) + [u(t),y(t)].
    \end{equation}
On the other hand if $v(t) = DR_{\gamma(t)^{-1}}Y(t)$ is the right translation, the index form is given by    \begin{equation}\label{indexformconservation}
    I(Y,Y) = \int_0^{\tau} \big\langle \Ad_{\gamma(t)^{-1}}v'(t),\Ad_{\gamma(t)^{-1}}v'(t)\big\rangle - \big\langle [v(t),v'(t)], u_0\big\rangle \, dt.
    \end{equation}    
\end{lemma}

\begin{proof}
Working with local left-invariant extensions, we can write
\begin{equation*}
\frac{D Y}{dt} = DL_{\gamma(t)}\big( y'(t) + \nabla_{u(t)} y(t) \big),
\end{equation*}
where $\nabla$ denotes the Levi-Civita connection. Since the Riemann curvature tensor is also left-invariant, we have $\langle R(Y,\gamma')\gamma', Y\rangle = \langle R(y,u)u, y\rangle$, so that the index form~\eqref{generalindexform} becomes 
    \begin{align*}
        I(Y,Y) &= \int_0^{\tau} \langle y'+\nabla_uy, y'+\nabla_uy\rangle - \langle \nabla_y\nabla_uu - \nabla_u\nabla_yu + \nabla_{[u,y]}u, y\rangle \, dt \\
         &= \int_0^{\tau} \langle z+\nabla_yu, z+\nabla_yu\rangle - \langle \nabla_y\nabla_uu, y\rangle - \langle \nabla_yu, \nabla_uy\rangle - \langle \nabla_{[u,y]}u, y\rangle \, dt \\
         &= \int_0^{\tau} \lvert z\rvert^2  +2 \langle z, \nabla_yu\rangle -
         \langle \nabla_yu, [u,y]\rangle -
         \langle \nabla_y\nabla_uu, y\rangle  - \langle \nabla_{[u,y]}u, y\rangle \, dt.
    \end{align*}
    Since $u(t)$ satisfies the Euler-Arnold equation \eqref{eulerarnold}, which is equivalent to $u' + \nabla_uu = 0$, an integration by parts gives 
    \begin{align*}
        I(Y,Y) &= 
    \int_0^{\tau} \lvert z\rvert^2  +2 \langle z, \nabla_yu\rangle -
         \langle \nabla_yu, [u,y]\rangle -
         \langle \nabla_{y'}u, y\rangle -
          \langle \nabla_yu, y'\rangle - \langle \nabla_{[u,y]}u, y\rangle \, dt \\ 
         &=     \int_0^{\tau} \lvert z\rvert^2  +  \langle z, \nabla_yu\rangle  -
         \langle \nabla_zu, y\rangle \, dt \\
         &=   \int_0^{\tau} \lvert z\rvert^2  - \langle [y,z], u\rangle \, dt,
    \end{align*}
    as desired.

Now suppose $v(t)$ is the right translated Jacobi field. Then $v(t) = \Ad_{\gamma(t)}y(t)$ and $$ v'(t) = \Ad_{\gamma(t)}\big( y'(t) + \ad_{u(t)}y(t)\big) = \Ad_{\gamma(t)}z(t).$$
Thus the first term in \eqref{indexform} becomes
$$\lvert z(t)\rvert^2 = \big\lvert \Ad_{\gamma(t)^{-1}} v'(t)\big\rvert^2. $$
To simplify the other term, we use the conservation law     \eqref{eulerarnold_conservation} to obtain 
 \begin{align*}
     \big\langle [y(t),z(t)], u(t)\big\rangle &= \big\langle [\Ad_{\gamma(t)^{-1}}v(t),\Ad_{\gamma(t)^{-1}} v'(t)], \Adstar_{\gamma(t)} u_0\big\rangle \\
     &= \big\langle \Ad_{\gamma(t)^{-1}} [v(t), v'(t)], \Adstar_{\gamma(t)} u_0\big\rangle \\
     &= \big\langle [v(t),v'(t)], u_0\big\rangle.
 \end{align*} 
 Putting these together gives \eqref{indexformconservation}.
\end{proof}

Our main theorem is the following.

\begin{theorem}\label{generaltheorem}
Let $G$ be a Lie group with left invariant metric, $u_0\in\g$, and $\gamma(t)$ be the geodesic in $G$ with initial conditions $\gamma(0)=\id$ and $\gamma'(0)=u_0$. If $u_0$ is not orthogonal to $[\mathfrak{g},\mathfrak{g}]$ and $\Ad_{\gamma(t)}$ is uniformly bounded below as an operator on $\mathfrak{g}$, then there is a conjugate point along the geodesic $\gamma$. Conversely,  if $u_0$ is orthogonal to $[\mathfrak{g},\mathfrak{g}]$, then there is no conjugate point along $\gamma$. 
\end{theorem}

\begin{proof} 
    By Lemma \ref{lemma_index}, we can write the index form for $Y(t) = dL_{\gamma(t)} y(t) = dR_{\gamma(t)}v(t)$ as 
    $$ I(Y,Y) = \int_0^{\tau} \big\lvert  \Ad_{\gamma(t)^{-1}} v'(t)\big\rvert^2 - \langle u_0, \ad_{v(t)}v'(t)\rangle \, dt. $$
If $u_0$ is orthogonal to $[\mathfrak{g},\mathfrak{g}]$, then obviously the second term vanishes, and the index form is positive-definite for any $\tau>0$. Hence there are no conjugate points.

Now assume that $\Ad_{\gamma(t)}$ is bounded from below and $u_0$ is not orthogonal to $[\mathfrak{g},\mathfrak{g}]$. 
By assumption there is a $\rho>0$ such that $\langle \Ad_{\gamma(t)^{-1}}w,\Ad_{\gamma(t)^{-1}}w\rangle \le \rho \langle w,w\rangle$ for all $t\in\mathbb{R}$ and all $w\in\mathfrak{g}$. 
We then obtain for any variation field that
        $$ I(Y,Y) \le  \int_0^{\tau} \rho \langle v'(t), v'(t)\rangle - \langle u_0, \ad_{v(t)}v'(t)\rangle \, dt. $$
    Since $u_0$ is not orthogonal to $[\mathfrak{g},\mathfrak{g}]$, there are vectors $p,q\in\mathfrak{g}$ such that $\epsilon := \langle [p,q], u_0\rangle$ is positive; we may assume without loss of generality that $p$ and $q$ are orthonormal. Define a variation field by 
    $$ v(t) = f(t) p + g(t)q $$
    where $f(0)=f(\tau)=g(0)=g(\tau)=0$. Then $$\langle u_0,\ad_{v(t)}v'(t)\rangle 
    = \epsilon \big(f(t)g'(t)-g(t)f'(t)\big).$$ 
    Thus the index form satisfies 
    $$ I(Y,Y) \le \int_0^{\tau} \rho \big(f'(t)^2 + g'(t)^2\big) - \epsilon \big( f(t)g'(t)-g(t)f'(t)\big) \, dt.$$
    Choosing $\tau = 2\pi\rho/\epsilon$ with 
    $$ f(t) = 
     \sin{\left( \frac{\epsilon t}{2\rho}\right)}\cos{\left( \frac{\epsilon t}{2\rho}\right)}
      \qquad \text{and}\qquad 
    g(t) = \sin^2{\left( \frac{\epsilon t}{2\rho}\right)}, $$
    we easily compute $I(Y,Y)\le 0$. 
\end{proof}

\begin{remark}
Note that the geodesics that are shown to be free of conjugate points in Theorem~\ref{generaltheorem} are necessarily homogeneous, since $u_0$ orthogonal to $[\mathfrak{g}, \mathfrak{g}]$ implies $\adstar_vu_0=0$ for all $v\in\g$, and $v=u_0$ in particular. Milnor proved in \cite{Milnor} that if $u_0$ is orthogonal to the commutator ideal $[\mathfrak{g}, \mathfrak{g}]$, then $\mathrm{Ric}(u_0, u_0) \leq 0$. The second part of Theorem~\ref{generaltheorem} provides an alternative proof of this fact, since $\mathrm{Ric}(u_0, u_0) > 0$ would imply the existence of conjugate points along the homogeneous geodesic by Gromoll-Meyer's criterion (see Section~\ref{sec:background-conjugate-criteria}). Note that Herring~\cite{herring} also proved that if $u_0$ happens to be \emph{steady}, then this orthogonality condition guarantees no conjugate points along the homogeneous geodesic, but our theorem holds also in the nonsteady case.
\end{remark}

\begin{example}\label{ex_no_conjugate}
To see that the assumption of a uniform lower bound for $\mathrm{Ad}_{\gamma(t)}$ is necessary in Theorem \ref{generaltheorem}, consider the closed subgroup of $GL(4, \mathbb{R})$ given by matrices of the form
\begin{equation*}
G = \left\{ 
\left(\begin{matrix}
1 & a & c+ab & d \\
0 & 1 & b & -c \\
0 & 0 & 1 & a \\
0 & 0 & 0 & 1 
\end{matrix}\right) \, : \, a, b, c, d \in \mathbb{R} \right\}.
\end{equation*}
Its Lie algebra $\mathfrak{g}$ is nilpotent and it is spanned by 
\begin{gather*}
\begin{split}
\heis_1 = \unit_{12} + \unit_{34},\quad 
\heis_2 = \unit_{23},\quad
\heis_3 = \unit_{13} - \unit_{24}, \quad
\heis_4 = -2\unit_{14}, 
\end{split}
\end{gather*}
where $\unit_{ij}$ is the matrix having $1$ in position $(i, j)$ and zeros everywhere else. These satisfy $[\heis_1, \heis_2] = \heis_3$, $[\heis_1, \heis_3] = \heis_4$ and all other brackets are zero. We consider the left-invariant metric that comes from declaring the $\heis_i$ to be an orthonormal basis and let $u_0 = \heis_2 - \heis_4$. One verifies that the unique geodesic $\gamma$ in this metric with $\gamma(0) = \mathrm{Id}$ and $\gamma'(0) = u_0$ is homogeneous and satisfies $\ad_{u_0}=-\unit_{31}$ as an operator in the $4$-dimensional Lie algebra, so that $\Ad_{\gamma(t)} = \Id - t \unit_{31}$. It has no conjugate points, as one can see from the index form, which in this case is 
\begin{align*}
I(Y, Y) &= \int_0^{\tau} \big(v_1'\big)^2 + \big(tv_1'+v_3'\big)^2 + 2v_1v_3' + \big(v_2'\big)^2 + \big(v_4'\big)^2\,dt \\ 
&= \int_0^{\tau} \big(v_1'\big)^2 + (w')^2 + \big(v_2'\big)^2 + \big(v_4'\big)^2\,dt,
\end{align*}
where $v(t) = dR_{\gamma(t)}^{-1}Y(t) =  \sum\limits_{i = 1}^4 v_i(t)\heis_i$ and $w = tv_1 + v_3$. This is clearly positive-definite, so we cannot have conjugate points. At the same time, the smallest singular value of $\mathrm{Ad}_{\gamma(t)} = \mathrm{Id} - t\unit_{31}$ goes to zero as $t$ increases, so $\mathrm{Ad}_{\gamma(t)}$ is not bounded from below.
\end{example}

\begin{corollary}\label{easyconjugatethm}
Suppose $G$ is a Lie group with a positive-definite bi-invariant metric $\bim(\cdot,\cdot)$, and that a left-invariant metric $\langle \cdot,\cdot\rangle$ is defined in terms of a positive-definite symmetric inertia operator $\Lambda\colon\mathfrak{g}\to\mathfrak{g}$, with $\langle u,v\rangle = \bim(u,\Lambda v)$. Then for any initial velocity $u_0 \in \mathfrak{g}$, the geodesic $\gamma(t)$ of the left-invariant metric $\langle\cdot,\cdot\rangle$ with $\gamma(0) = \mathrm{id}$ and $\gamma'(0) = u_{0}$ has a conjugate point if and only if $\Lambda u_0$ is not in the center of $\mathfrak{g}$. Furthermore, if $\gamma$ has a conjugate point then the conjugate time $\tau$ satisfies
$$\frac{2\pi \lambda_m}{\lVert \ad_{\Lambda u_0}\rVert_{\text{op}}} \leq  \tau \leq \frac{2\pi \lambda_M}{\lVert \ad_{\Lambda u_0}\rVert_{\text{op}}},$$
where $\lambda_m$ and $\lambda_M$ are the smallest and largest eigenvalues of $\Lambda$, respectively, and the operator norms are computed using $\bim(\cdot, \cdot)$.
\end{corollary}

\begin{proof}
By Lemma \ref{lemma_index}, the index form for $Y(t)=dR_{\gamma(t)}v(t)$ can be written in terms of the bi-invariant metric $\bim(\cdot, \cdot)$ as
\begin{align*} I(Y,Y) &= 
\int_0^{\tau} \big\langle \Ad_{\gamma(t)^{-1}}v'(t), \Ad_{\gamma(t)^{-1}}v'(t)\big\rangle-\bim\big([v(t),v'(t)],\Lambda u_0\big)\, dt \\
&=\int_0^{\tau} \bim\big(\Ad_{\gamma(t)^{-1}}v'(t),\Lambda \Ad_{\gamma(t)^{-1}}v'(t)\big)-\bim\big(\ad_{\Lambda u_0}v(t),v'(t)\big)\, dt 
\end{align*}
To estimate the first term in the index form, note that  
$$\lambda_m\bim(w, w) \leq \bim(w,\Lambda w) \le \lambda_M \bim(w,w), \qquad \text{for all } w \in \mathfrak{g}.$$
These imply that
    $$ 
    \lambda_m \bim\big( v'(t), v'(t) \big)
    \le
    \bim\big(\Ad_{\gamma(t)^{-1}}v'(t),\Lambda \Ad_{\gamma(t)^{-1}} v'(t))
    \le \lambda_M \bim\big( v'(t), v'(t) \big) $$
    since the metric $\bim(\cdot,\cdot)$ is bi-invariant.  
  Now, note that the operator $\adlambop := \ad_{\Lambda u_0}$ is antisymmetric on $\mathfrak{g}$ in the bi-invariant metric, so we have a decomposition
\begin{equation}\label{eq_g_decomp}
    \mathfrak{g} = \ker \adlambop \oplus V_1 \oplus \cdots \oplus V_k
    \end{equation}
    such that each subspace $V_i$ is spanned by a pair of orthonormal (in the bi-invariant metric) vectors $p_i$, $q_i$, with $\adlambop p_i = \alpha_i q_i$ and $\adlambop q_i = -\alpha_i p_i$ for some $\alpha_i > 0$.

    If $\Lambda u_0$ is a central element, then $\mathfrak{g} = \ker \adlambop$. In this case $u_0$ is orthogonal to $[\mathfrak{g},\mathfrak{g}]$,  
    and by Theorem~\ref{generaltheorem} there are no conjugate points along $\gamma$.

    If $\Lambda u_0$ is not a central element, then we can construct a nontrivial variation within each $V_i$ as follows: let $\omega := \frac{\alpha_i}{2\lambda_M}$, and define a variation field by 
    $$ v(t) = \sin{(\omega t)} \big(\cos{(\omega t)} p_i + \sin{(\omega t)} q_i\big),$$
    which vanishes at $t=0$ and $t=\pi/\omega$. Then it is easy to compute that the index of $v$ is 
    \begin{align*}
        I(v,v) &\le \int_0^{\pi/\omega} \lambda_M \bim\big(v'(t),v'(t)\big) - \bim\big(v'(t),\ad_{\Lambda u_0}v(t)\big) \, dt \\
        &=         \int_0^{\pi/\omega} \big( \lambda_M \omega^2 -  2\lambda_M \omega^2 \sin^2{(\omega t)} \big)\, dt = 0.
    \end{align*}
   Hence there is a conjugate point along $\gamma$ that happens no later than time $\pi/\omega = 2\pi\lambda_M/\alpha_i$, and the earliest such time is obtained by taking the largest $\alpha_i$, which equals $\| \ad_{\Lambda u_0} \|_{op}$.

Conversely, to obtain a lower bound on the injectivity radius, consider an arbitrary variation field $v(t)$ vanishing at $t = 0$ and $t = \tau$. Decompose $v$ according to \eqref{eq_g_decomp} as $v(t) = v_0(t) + \cdots + v_k(t)$ and write 
\begin{equation*}
v_i(t) = f_i(\ell t)p_i + g_i(\ell t)q_i, \qquad i = 1, \ldots, k,
\end{equation*}
for some functions $f_i$ and $g_i$ vanishing at $t = 0$ and $t = \ell \tau$, for $\ell:=\max\{\alpha_i\}/\lambda_m$. Then by orthogonality the index form can be written 
$$ I(v,v) \ge \sum_{i=1}^k I_i(v_i,v_i),$$
where each index form on $V_i$ satisfies
$$    I_i(v_i,v_i) \ge \int_0^{\ell \tau} \ell \lambda_m (f_i'^2+g_i'^2) + 2 \alpha_i f_i'g_i \, dt \ge \alpha_i \int_0^{\ell \tau}  (f_i'^2+g_i'^2) + 2 f_i'g_i \, dt.$$
This latter expression is precisely a multiple of the index form on $SO(3)$ under the standard round metric (corresponding to a rigid body with all moments of inertia equal), where we know the first conjugate point happens at $2\pi$. Hence for $\ell\tau\le 2\pi$ we see that every $I_i(v_i,v_i)\ge 0$, and the first conjugate point cannot happen before $2\pi/\ell$.
\end{proof}

Note that this reproduces the known result for conjugate points under the bi-invariant metric, where $\Lambda$ is the identity, so the result is sharp.

\begin{remark}
The decomposition of the Lie algebra into $2$-planes which are invariant under $\ad_{\Lambda u_0}$ suggests that conjugate points on compact Lie groups arise in pairs, and therefore have even multiplicity. We do not know if this is true in general, but it is certainly an interesting question with potential implications for fluid dynamics (see \cite{DMSY}). 
\end{remark}
 
In the important special case where $G$ is semisimple and compact, we get a conjugate point along every geodesic in $G$ and every homogeneous space $M=G/H$. We emphasize again that the Riemannian metric on $M$ need not arise from the bi-invariant metric on $G$.

\begin{corollary}\label{homogeneouscorollary} 
If $G$ is a compact semisimple Lie group with any left-invariant metric and $H$ is any closed subgroup, then every geodesic in the homogeneous space $G/H$ develops a conjugate point. In particular if $M$ is a compact Riemannian homogeneous space with $\pi_1(M)$ finite, then every geodesic develops a conjugate point.
\end{corollary}

\begin{proof}
    Since $G$ is compact, it has a bi-invariant metric. Moreover, since $G$ is semisimple, its center is trivial, so every geodesic in $G$ has a conjugate point by Corollary \ref{easyconjugatethm}.
    O'Neill's result (Corollary ~\ref{homogeneousconjugatepointscor}) implies that every geodesic on the homogeneous space $G/H$ has a conjugate point no later than the corresponding horizontal geodesic on $G$.

    A compact homogeneous space $M$ can be written as the quotient of $G/H$ where $G$ is also compact. The Lie algebra $\mathfrak{g}$ can be written as a direct sum of an abelian group and a semisimple part (Knapp~\cite{Knapp} Chapter IV), and $\pi_1(M)$ is infinite if and only if this abelian part is nontrivial. Thus if $\pi_1(M)$ is finite, then $G$ is semisimple. 
\end{proof}

In the following subsections, we present applications of Corollary~\ref{easyconjugatethm} to Lie groups with invariant metrics which are of interest from a physical standpoint, namely for rigid bodies and fluids.

\subsection{Rotations of rigid bodies}\label{rigid-body}

Consider the group of rotations $SO(n)$. Any left-invariant metric on $SO(n)$ can be obtained from the canonical bi-invariant metric $\kappa(\cdot,\cdot)$ and a symmetric positive definite operator $\Lambda:\so(n)\rightarrow\so(n)$, i.e.
\begin{equation}\label{son-metric}
\langle u,v\rangle:=\bim(u,\Lambda v),\quad\text{where}\quad \bim(u, v) = \tfrac{1}{2}\mathrm{Tr}(uv^\top), \qquad u,v\in\mathfrak{so}(n),
\end{equation}
and $\mathrm{Tr}$ denotes the trace. An important special case is the \emph{generalized rigid body metric} obtained by considering $\Lambda(u) := \frac{1}{2}(Mu + uM)$, where $M$ is a symmetric matrix with positive eigenvalues $\mu_1, \hdots, \mu_n$ (cf. \cite{Frahm}, \cite{Ratiu}, \cite{FM}, \cite{Manakov}). This is the kinetic energy metric describing the rotations of an $n$-dimensional free rigid body with principal moments of inertia $\mu_1,\hdots,\mu_n$. For $1\leq i<j\leq n$, let $\son_{ij}=\unit_{ji}-\unit_{ij}$ denote the matrix full of zeros except for $-1$ in position $(i,j)$ and $1$ in position $(j,i)$. Then $\son_{ij}$ is an eigenvector of $\Lambda$ for the eigenvalue $\frac{1}{2}(\mu_i+\mu_j)$, and $\{\son_{ij}\}_{1\leq i<j\leq n}$ is a so-called \emph{nice basis} (see \cite{Krishnan}) of the Lie algebra $\so(n)$. In this basis, the Ricci tensor is diagonal, with diagonal elements
$$\mathrm{Ric}(\son_{ij},\son_{ij})=\sum_{k\ne i,j}\frac{2\mu_i\mu_j}{(\mu_i+\mu_k)(\mu_j+\mu_k)}.$$
Thus the Ricci curvature of the generalized rigid body metric on $SO(n)$ is everywhere positive, generalizing a result of Milnor~\cite{Milnor} when $n=3$, and conjugate points exist along any geodesic by the Gromoll-Meyer criterion \cite{GM}. 

Corollary~\ref{easyconjugatethm} shows that conjugate points exist in fact for \emph{any} left-invariant metric on $SO(n)$ and along any geodesic, even in directions of negative Ricci curvature. For example, relaxing the positivity constraint on $\mu_1,\hdots,\mu_n$ in the generalized rigid body metric and setting $\mu_1=-1$, $\mu_i=2$ for $i=1,\hdots,n$, one gets negative Ricci curvature in the direction of $\son_{12}$: $\mathrm{Ric}(\son_{12}, \son_{12})<0$. The geodesic $\gamma$ with initial conditions $\gamma(0)=\id$ and $\gamma'(0)=u_0:=\son_{12}$ is steady since $\adstar_{u_0}u_0=\Lambda^{-1}(\ad_{u_0}\Lambda u_0)=\Lambda^{-1}(\frac{1}{2}\ad_{u_0}u_0)=0$, and $\mathrm{Ric}(u(t), u(t))<0$ for all $t$. Corollary~\ref{easyconjugatethm} shows that conjugate points exist along this geodesic despite the negative Ricci curvature. More generally, if the operator $\Lambda$ in \eqref{son-metric} is chosen to have the $\son_{ij}$'s as eigenvectors for \emph{arbitrary} positive eigenvalues $\lambda_{ij}$, then (setting $\lambda_{ji}=\lambda_{ij}$ for convenience) the diagonal Ricci tensor is given by
$$\mathrm{Ric}(\son_{ij},\son_{ij})=\sum_{k\ne i,j} \frac{(\lambda_{ij}-\lambda_{ik}+\lambda_{jk})(\lambda_{ij}+\lambda_{ik}-\lambda_{jk})}{2\lambda_{ik}\lambda_{jk}}.$$
which can obviously be negative for some choices of positive $\lambda_{ij}$.

\begin{remark}\label{rem-Misiolek}
If $u_0$ is a steady solution of the Euler equation corresponding to eigenvalue $\lambda$ of $\Lambda$, the Misio{\l}ek criterion \eqref{eq_m_curvature} is written in terms of $\adop=\ad_{u_0}$ as  
$$\langle\ad_{v}u_0+\adstar_{v}u_0,\ad_vu_0\rangle=\bim(\Lambda\ad_{v}u_0-\lambda\ad_vu_0,\ad_vu_0)=\bim( (\Lambda -\lambda I)\adop v, \adop v).$$
Thus, we see that for the steady solution corresponding to, e.g., the smallest eigenvalue $\lambda$ of $\Lambda$, the Misio{\l}ek criterion fails to detect the existence of conjugate points on the corresponding homogeneous geodesic, even though here we know every geodesic has them.  
\end{remark}

In addition, Corollary~\ref{easyconjugatethm} provides an upper bound on the diameter of $SO(n)$ under the left-invariant metric~\eqref{son-metric}. For any $u_0\in\mathfrak{so}(n)$, denoting $\tau(u_0)$ the first conjugate time along the geodesic $\gamma$ with initial conditions $\gamma(0)=\id$ and $\gamma'(0)=u_0$, the diameter is given by $\mathrm{Diam}(SO(n),\Lambda)=\sup_{\lvert u_0\rvert =1}\tau(u_0)$ where $\lvert u_0\rvert^2=(u_0,\Lambda u_0)$, and verifies the following bound
\begin{equation}\label{diameter_son}
\mathrm{Diam}(SO(n),\Lambda)\leq \frac{2\pi\lambda_M}{\inf_{\lvert u_0\rvert=1}\|\ad_{\Lambda u_0}\|_{op}}
\end{equation}
where $\lambda_M$ is the largest eigenvalue of $\Lambda$.
For $n\ge 3$ and a fixed element $u_0\in\mathfrak{so}(n)$, $m_0:=\Lambda u_0\in\mathfrak{so}(n)$ is an antisymmetric real $n\times n$ matrix with a basis of $n$ complex eigenvectors $x_k$ with pure imaginary eigenvalues $ i\alpha_k$ for $\alpha_k\ge 0$ and $1\le k\le p:=\lfloor n/2\rfloor$, together with $x_{-k}:=\overline{x_k}$ having eigenvalues $i\alpha_{-k}:=-i\alpha_k$, with $x_0$ and $\alpha_0=0$ in case $n$ is odd. The eigenvectors of $\ad_{m_0}$ are given by $X_{jk} = x_jx_k^\top - x_kx_j^\top$ with corresponding eigenvalues $i(\alpha_j+\alpha_k)$.  
Thus if the eigenvalues $i\alpha_k$ are ordered with $\alpha_k\le \alpha_{k+1}$, then we see that the operator norm is $\lVert \ad_{m_0}\rVert_{\text{op}} = \alpha_{p-1}+\alpha_p$. 
If $\lambda_M$ and $\lambda_m$ are respectively the largest and the smallest eigenvalues of $\Lambda$, the unit-speed constraint $\kappa(u_0,\Lambda u_0) = 1$ implies $\lambda_M^{-1} \le  (m_0,m_0) \le \lambda_m^{-1}$. Thus, the upper bound in \eqref{diameter_son} is found by minimizing $\alpha_{p-1}+\alpha_p$ subject to the constraints $0\le\alpha_1\le\hdots\le \alpha_{p-1}\le \alpha_p$ and $\lambda_M^{-1}\le\alpha_1^2+\hdots+\alpha_p^2\leq \lambda_m^{-1}$. For $p\leq 3$, the minimal value is $1/\sqrt{\lambda_M}$, attained at $(\alpha_1,\hdots,\alpha_p)=(0,\hdots,0,1/\sqrt{\lambda_M})$, while for $p\geq 4$, the minimal value is $2/\sqrt{p\lambda_M}$, attained at $(\alpha_1,\hdots,\alpha_p)=1/\sqrt{p\lambda_M}(1,\hdots,1)$.  
Finally we obtain
$$\mathrm{Diam}(SO(n),\Lambda)\leq \pi\lambda_M^{3/2}\max(2, \lfloor n/2\rfloor^{1/2})$$
A similar method could be used to obtain a bound on the injectivity radius.

\subsection{The Zeitlin model}\label{zeitlin}

The incompressible Euler equations of hydrodynamics can be viewed as the geodesic flow on the infinite-dimensional group of volume-preserving diffeomorphisms $\mathrm{Diff}_{\mu}(M)$, where $M$ is the fluid's domain, with respect to the right-invariant $L^2$ metric. For certain two-dimensional manifolds, in particular $M = S^2$ (the two-sphere), the \emph{Zeitlin model} provides a finite-dimensional approximation of the Lie algebra of divergence-free vector fields,  $T_{\mathrm{id}}\mathrm{Diff}_{\mu}(S^2)$, by a sequence of matrix Lie algebras. This sequence consists of $\mathfrak{su}(N)$ with a scaled Lie bracket and right-invariant Riemannian metric defined as follows. Recall that the Lie algebra $\mathfrak{su}(N)$ admits a decomposition into irreducible $\mathfrak{so}(3)$-modules,
\[
\mathfrak{su}(N) = V_1 \oplus V_2 \oplus \cdots \oplus V_{N-1},
\]
where each $V_\ell$ is $(2\ell + 1)$-dimensional and can be thought of as a finite-dimensional analogue of the $\ell$th spherical harmonic mode. We can define a quantized Laplacian $\Delta_N : \mathfrak{su}(N) \rightarrow \mathfrak{su}(N)$ acting diagonally on this decomposition, assigning eigenvalue $-\lambda_\ell = -\ell(\ell+1)$ to each $V_\ell$ by analogy with the Laplace-Beltrami operator acting on spherical harmonics. The \emph{Zeitlin metric} is then defined as the right-invariant extension of
\begin{equation}\label{zeitlin_metric}
 \langle u, v \rangle := \frac{1}{N}\mathrm{Tr}\big(u^{\dagger}(-\Delta_N)v\big), \qquad u, v \in \mathfrak{su}(N).
\end{equation}
Note that this is no longer bi-invariant, but has the feature that its geometry is a finite-dimensional approximation of $\mathrm{Diff}_{\mu}(S^2)$. For convergence results in the $N \rightarrow \infty$ limit, see \cite{ModinViviani}.

The Ricci curvature of \eqref{zeitlin_metric} is negative in many directions. In fact, the Ricci tensor is a scalar multiple of the metric on each subspace $V_\ell$, and numerical evidence suggests that roughly one third of these scalar values are negative~\cite{klasleandrosteve}. Despite this, our main theorem shows that \emph{every} geodesic of \eqref{zeitlin_metric} has a conjugate point, since $\mathfrak{su}(N)$ has trivial center, so the conjugate locus entirely encloses the origin. At the same time, it is known that $\mathrm{Diff}_{\mu}(S^2)$ has infinite diameter (\cite{Eliashberg}). This rules out the possibility that the conjugate locus in the Zeitlin model remains uniformly bounded as $N \to \infty$: if all conjugate points lay within a fixed distance $\delta$, independent of $N$, then no geodesic could minimize length beyond $\delta$, and the diameter of $\mathrm{SU}(N)$ would be at most $2\delta$. This bound would carry over to $\mathrm{Diff}_{\mu}(S^2)$ by the convergence results of \cite{ModinViviani}, contradicting the infinite diameter result. Nevertheless, it is still possible that every nontrivial geodesic in $\mathrm{Diff}_{\mu}(S^2)$ develops a conjugate point eventually, without a uniform bound on the conjugate time. We leave this as an open question for future work.

The quotient space $\mathrm{SU}(N)/\mathrm{SO}(3)$ is the configuration space for fluid motion modulo rigid rotations, which is also of interest due to the physical observations that large-scale flows tend towards steady states up to global rotation. Note that geodesics have conjugate points after sufficiently long time in this space by Corollary \ref{homogeneouscorollary}.  On the other hand, Ricci curvature is negative for any fixed vector in the successive approximations for sufficiently large $N$~\cite{klasleandrosteve}.

\section{Techniques for other Lie groups}\label{sec:noncompact}

In this section we analyze three additional techniques that can be used to find conjugate points even when the adjoint action is unbounded.
In Section \ref{closedgeodesicsection} we show that along any nonhomogeneous closed geodesic in a Lie group $G$ with a left-invariant metric, there is a conjugate point. The result also shows that a closed nonhomogeneous geodesic in any homogeneous space $G/H$ must have a conjugate point.  Then in Section \ref{milnornonsteadythm}, we show that for a nonhomogeneous geodesic in a $3D$ unimodular Lie group, there is a simple expression for the index form found by using a natural orthogonal basis, through the technique of Milnor~\cite{Milnor}. Finally in Section \ref{steadygeneral} we discuss how to solve the constant-coefficient Jacobi equation in the steady case and obtain new examples.

\subsection{Closed nonhomogeneous geodesics}\label{closedgeodesicsection}

The main difficulty in finding conjugate points is choosing an effective direction of variation: one that generates a family of nearby geodesics which, to first order, intersect the original geodesic again at some future time. Our key observation here is that the geodesic itself suggests such a direction: we take the time derivative $u'(t)$ of its left-translated velocity vector, which is never zero for nonhomogeneous geodesics. This provides a canonical solution of the linearized equations, and under a symmetry assumption (that right multiplication by $\gamma(\tau)\gamma(0)^{-1}$ is an isometry), it leads directly to a nontrivial Jacobi field.

Note that this symmetry assumption is automatically satisfied by all closed geodesics. Of course, every closed geodesic has a \emph{cut} point, since it eventually ceases to minimize distance. However, the presence of a conjugate point requires a family of \emph{nearby} geodesics that are shorter.

\begin{theorem}\label{closedgeodesicthm}
Suppose $\gamma(t)$ is a solution of \eqref{eulerarnold}, with velocity $u(t)$ nonconstant. If right multiplication by $\gamma(\tau)\gamma(0)^{-1}$ is an isometry of the left-invariant metric for some $\tau >0$, then $\gamma(\tau)$ is conjugate to $\gamma(0)$. In particular this applies if $\gamma(\tau)=\gamma(0)$.
\end{theorem}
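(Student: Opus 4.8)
The plan is to exploit the Jacobi-field characterization from Proposition~\ref{prop:jacobiequations}: a conjugate point at $\gamma(\tau)$ is equivalent to the existence of a nonzero solution $y(t)$ of the system \eqref{jacobiequation} with $y(0)=y(\tau)=0$. So I would aim to construct such a $y$ explicitly. The key structural fact is that the hypothesis---right multiplication by $\gamma(\tau)\gamma(0)^{-1}$ being an isometry---should produce a symmetry of the geodesic that forces a nontrivial Jacobi field vanishing at both endpoints. Since the second equation in \eqref{jacobiequation}, namely $z'(t)=\adstar_{u(t)}z(t)+\adstar_{z(t)}u(t)$, governs $z$, I would first look for a natural solution $z(t)$ and then recover $y$ from the first equation $y'+\ad_{u}y=z$.

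**The candidate Jacobi field.** The natural guess is to differentiate the geodesic in the direction of its own flow symmetry. Observe that $u'(t)=\adstar_{u(t)}u(t)$, and differentiating this relation I expect $z(t)=u'(t)$ to solve the $z$-equation in \eqref{jacobiequation}: indeed, linearizing the Euler--Arnold equation along the one-parameter family of time-translated geodesics should give exactly $z=u'$ as a solution, corresponding to the variation by reparametrization. Then solving $y'+\ad_u y = u'$ gives a candidate $y$. The boundary condition $y(0)=0$ can be arranged by choosing the right initial data, and the crux is to show $y(\tau)=0$. Here is where the isometry hypothesis enters: I would use the conservation law of Proposition~\ref{angmomprop}, $\Adstar_{\gamma(t)^{-1}}u(t)=\Adstar_{\gamma(0)^{-1}}u(0)$ (constant), together with the fact that right multiplication by $\kappa:=\gamma(\tau)\gamma(0)^{-1}$ being an isometry means $\Adstar_\kappa$ preserves the inner product $g$, to relate the velocity data at time $\tau$ back to that at time $0$. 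The condition that $u$ is nonconstant (nonsteady) is what guarantees the constructed field $y$ is not identically zero.

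**Making the endpoint condition work.** Concretely, I would track the two translated quantities: since $\kappa$ acts as an isometry, the geodesic $\tilde\gamma(t)=\kappa\gamma(t)$ (equivalently $R_\kappa\circ\gamma$) is again a geodesic with the same velocity field $u(t)$ in the Lie algebra. Comparing $\gamma$ on $[\tau,2\tau]$ with the $\kappa$-translate of $\gamma$ on $[0,\tau]$, I expect to see that the geodesic, up to the isometry, repeats, so that $u$ is ``periodic modulo the isometry.'' Differentiating the resulting identity in the variational parameter should yield a Jacobi field along $\gamma|_{[0,\tau]}$ that closes up, i.e.\ vanishes at both $0$ and $\tau$. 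The vector $y$ built this way is nonzero precisely because $u'\not\equiv 0$; if $y$ vanished identically then $z=u'$ would vanish, contradicting nonsteadiness. The special case $\gamma(\tau)=\gamma(0)$ (so $\kappa=\mathrm{id}$) is then immediate, since the identity is trivially an isometry and $u$ is genuinely periodic.

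**Main obstacle.** The hard part will be the endpoint argument $y(\tau)=0$: verifying that the candidate field actually closes up requires carefully combining the conservation law with the isometry condition, and ensuring the normalization of $y$ at $t=0$ is compatible with vanishing at $t=\tau$ rather than merely producing \emph{some} Jacobi field. I would need to check that the isometry $\Adstar_\kappa$ fixes the conserved momentum in the right way, so that the flow equation $y'+\ad_u y=u'$ integrates to a function returning to $0$. Handling the nonsteady hypothesis to rule out the trivial Jacobi field, and confirming that the reparametrization field $z=u'$ is genuinely the linearization picked out by the symmetry (rather than a spurious solution), are the delicate points; the rest should reduce to routine manipulation of \eqref{jacobiequation} and the conservation law.
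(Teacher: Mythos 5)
Your proposal follows essentially the same route as the paper's proof: take $z(t)=u'(t)$ as a particular solution of the linearized Euler--Arnold equation (nontrivial precisely because the geodesic is nonsteady), solve the linearized flow equation $y'+\ad_{u}y=z$ with $y(0)=0$, and combine the conservation law of Proposition~\ref{angmomprop} with the isometry hypothesis to force $y(\tau)=0$. The step you flag as the main obstacle is in fact immediate once the general solution is written explicitly: since $\ad_uu=0$, the field $y_p(t)=u(t)$ is itself a particular solution of $y'+\ad_uy=u'$, the homogeneous solutions are $y_c(t)=\Ad_{\gamma(t)^{-1}}w_0$, so (taking $\gamma(0)=\id$ and $w_0=-u_0$) one gets $y(t)=u(t)-\Ad_{\gamma(t)^{-1}}u_0=\Adstar_{\gamma(t)}u_0-\Ad_{\gamma(t)^{-1}}u_0$, and right multiplication by $\gamma(\tau)$ being an isometry is exactly the statement $\Adstar_{\gamma(\tau)}\Ad_{\gamma(\tau)}=I$, which makes $y(\tau)=0$; no separate ``periodic modulo the isometry'' comparison of $\gamma$ on $[\tau,2\tau]$ with its translate is needed.
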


\begin{proof}[Proof of Theorem~\ref{closedgeodesicthm}]
By left-invariance, we may assume without loss of generality that $\gamma(0)$ is the identity. Let $u(t)$ be defined by the flow equation $\gamma'(t) = \gamma(t)u(t)$. By~\eqref{eulerarnold_conservation}, $u(t)$ satisfies the angular momentum conservation law
\begin{equation}\label{angularmomentumform}
u(t) = \Adstar_{\gamma(t)}u_0
\end{equation}
in terms of the initial condition $u(0)=u_0$. Differentiating the Euler equation \eqref{eulerarnold} for $u(t)$ with respect to time gives 
$$ u''(t) - \adstar_{u(t)}u'(t) - \adstar_{u'(t)}u(t)=0,$$
showing that $z_p(t) := u'(t)$ is a particular solution of the linearized Euler equation \eqref{jacobiequation}.
Since $u'(t)$ is nowhere zero, this is a nontrivial solution. If we rewrite \eqref{jacobiequation} in the form 
$$ \frac{d}{dt}\Big( \Ad_{\gamma(t)}y(t)\Big) = \Ad_{\gamma(t)} z(t),$$
we see that the general solution of the complementary homogeneous equation is $y_c(t) = \Ad_{\gamma(t)^{-1}}w_0$ for some 
vector $w_0\in \mathfrak{g}$. Thus the general solution is 
$$ y(t) = y_p(t) + y_c(t) = u(t) + \Ad_{\gamma(t)^{-1}}w_0.$$
To have $y(0)=0$ as desired, we choose $w_0 = -u_0$. Inserting \eqref{angularmomentumform} in this, we obtain
\begin{equation*}
y(t) = \Adstar_{\gamma(t)}u_0 - \Ad_{\gamma(t)^{-1}}u_0.
\end{equation*}
If right multiplication by $\gamma(\tau)$ is an isometry, then $\Adstar_{\gamma(\tau)}\Ad_{\gamma(\tau)}=I$,
and we will obtain $y(\tau)=0$. Clearly the corresponding Jacobi field $y(t)$ is nontrivial on $[0,\tau]$ since $z_p(t)$ is nontrivial.
\end{proof}

\begin{remark} If we knew that every   
geodesic $\gamma(t)$ such that 
$\Ad_{\gamma(t)}^*\Ad_{\gamma(t)}$ 
met the identity at time $\tau$ was actually periodic in time, this theorem would be a consequence of Theorem~\ref{generaltheorem}. But we do not necessarily know that.\end{remark}

\begin{corollary}\label{homogeneousclosed}
In a homogeneous space $M=G/H$ with left-invariant metric on $G$ satisfying \eqref{condition2}, every closed nonhomogeneous geodesic has a conjugate point. In particular in a nonpositively curved homogeneous space, every closed geodesic must lift to a homogeneous horizontal geodesic in $G$. 
\end{corollary}

\begin{proof}
    Consider a closed geodesic $\gamma\colon [0,\tau]\to M$ and its horizontal lift $\tilde{\gamma}\colon [0,\tau]\to G$. Since $\gamma$ is closed, we have $\gamma(\tau)=\gamma(0)$, and thus $\tilde{\gamma}(\tau)\tilde{\gamma}(0)^{-1}\in H$. Since $H$ acts on the left and right by isometries on $G$ by \eqref{condition2}, it satisfies the conditions of Theorem \ref{closedgeodesicthm}, and there is a conjugate point along $\tilde{\gamma}$ in $G$. Thus by O'Neill's Theorem \ref{oneillhomogeneousindex}, there is also a conjugate point along $\gamma$ in $M$.

    For the second part, if there is a conjugate point then there must be at least some positive curvature in the manifold. Hence if no geodesic has conjugate points, then every closed geodesic must come from a homogeneous horizontal geodesic in $G$, i.e., a steady solution of the Euler-Arnold equation.
\end{proof}

In order to state a corollary of Theorem \ref{closedgeodesicthm}, recall that a Riemannian manifold $M$ is said to have \emph{dense closed geodesics} if for every $p\in M$ and every unit $v\in T_pM$ and every $\varepsilon>0$, there is a $w\in T_pM$ such that $\lvert v-w\rvert < \varepsilon$ and the geodesic $\gamma(t)=\exp_p(tw)$ is closed (i.e., $\gamma(\tau)=p$ for some $\tau>0$). Examples of manifolds with dense closed geodesics include: compact manifolds with negative curvature, by the Anosov theorem; certain quotients of nilpotent Lie groups, all of which have curvatures of both signs (so long as they are not abelian) (\cite{deMeyer}, \cite{Eberlein}, \cite{LeePark}, \cite{Mast}); and of course many examples with positive curvature, such as $U(n)$ with the bi-invariant metric (see also \cite{Besse}). By left-invariance, it is enough to check this condition when $p$ is the identity. 

\begin{corollary}
If a Lie group $G$ with left-invariant metric has dense closed geodesics, then either it has positive curvature in some section at the identity, or it is abelian and flat.

If $H=\mathrm{Iso}(G)$ is the isotropy group of $G$ under its left-invariant metric, and if the homogeneous space $M=G/H$ has dense closed geodesics, then it must be naturally reductive. 
\end{corollary}

\begin{proof}
First consider the full Lie group $G$. For every $v\in T_{\id}G$ there is a nearby vector $w$ such that the geodesic in the direction of $w$ is closed. If any such geodesic is nonsteady, the previous theorem gives a conjugate point along it, which implies there must be positive curvature somewhere along the geodesic. Otherwise every closed geodesic is steady. 

If the geodesic $t\mapsto \exp_{\id}(tw)$ is steady, then we must have $\adstar_ww=0$. Now the quadratic form $v\mapsto A(v) := \adstar_vv$ is continuous 
on a finite-dimensional Lie algebra, and density of closed geodesics implies that for every $v\in T_{\id}G$ and $\delta>0$ there is a $w\in T_{\id}G$ such that $\lvert v-w\rvert<\delta$ and $A(w)=0$. Hence we must have $A(v)=0$ for all $v\in T_{\id}G$. 
 
Notice that $\adstar_vv=0$ for all $v\in T_{\id}G$ implies that the metric is bi-invariant, and thus the sectional curvature $K(u,v)$ is given by the well-known formula $K(u,v) = \tfrac{1}{4} \lvert [u,v]\rvert^2$. Thus either $[u,v]=0$ for all $u,v\in T_{\id}G$, so that $G$ is abelian and flat, or we again get some positive curvature.

Now we consider the homogeneous space $M=G/H$, assuming the subgroup $H$ is exactly the isotropy group. In this case every geodesic on $M$ arises from a horizontal geodesic on $G$. If such a geodesic is closed in $M$ and nonhomogeneous, then by Corollary \ref{homogeneousclosed}, it must have a conjugate point, and thus positive curvature in some section along it. Otherwise every closed geodesic in $M$ comes from a homogeneous geodesic in $G$, which by the same reasoning as before implies that $\adstar_ww=0$ for all \emph{horizontal} vectors $w$ in $\mathfrak{g}$. 

This condition implies that for any horizontal vectors $w$ and $x$, we have 
$$ 0 = \langle \adstar_ww, x\rangle = \langle w, \ad_wx\rangle = -\langle \ad_xw, w\rangle. $$
This is exactly the condition for $M$ to be naturally reductive; see Section \ref{homogeneoussection}. Conjugate points can then be analyzed using the technique of Ziller~\cite{Ziller2}.  
\end{proof} 
 
\begin{remark}\label{torus}
This result also gives an alternative proof of the known fact that the only compact Lie group with nonpositive sectional curvature is the abelian flat torus. This arises from the fact that nonpositive curvature requires solvability of the group, and the only compact solvable groups are abelian -- see, e.g., \cite{Knapp}. 
\end{remark}

\subsection{The $3D$ unimodular case}\label{milnornonsteadythm}
 
Consider a three-dimensional unimodular Lie group $G$ with a left-invariant metric. We will establish in Theorem \ref{3dunimodularthm} what the index form \eqref{indexform} looks like in this case along a general nonhomogeneous geodesic, without using the adjoint action. We obtain a criterion that can be computed in terms of the solution of the Euler-Arnold equation, which in this situation would typically involve Jacobi elliptic functions, and for which the geodesic $\gamma(t)$ and corresponding adjoint action $\Ad_{\gamma(t)}$ is impossible to compute explicitly. We apply it in Example \ref{sl2-example} to show that no geodesic in $SL(2,\mathbb{R})$ under the Berger-type metric has conjugate points.

By Milnor~\cite{Milnor}, there is a self-adjoint operator $L\colon \mathfrak{g}\to\mathfrak{g}$ such that $[u,v] = L(u\times v)$ for all $u,v\in \mathfrak{g}$, where $\times$ is the usual three-dimensional cross product on $\mathfrak{g}\cong \mathbb{R}^3$. Note that if $L$ were 
invertible, then $\simpkill(u,v) := \langle u, L^{-1}v\rangle$ would be a nondegenerate bi-invariant form and the group would be quadratic, but we do not assume this. The Euler-Arnold equation is then given by 
\begin{equation}\label{eulerarnoldmilnor}
u' = -u\times Lu
\end{equation}
and we can easily compute that $\langle u,u\rangle$ and $\langle u, Lu\rangle$ are both constant in time. We assume $\langle u,u\rangle = 1$ and define $\ell := \langle u, Lu\rangle$. In the following lemma, we construct a basis which allows us to simplify the index form and isolate the contributions from each component.

\begin{lemma}\label{orthogonallemma}
For any nonsteady solution $u(t)$ of the Euler-Arnold equation, we have that $u'(t)$ is nowhere zero, so the function $\delta(t):=\lvert u'(t)\rvert^2$ is always positive.  The vector fields $u(t)$ along with
$$ v(t) := \delta(t)^{-1} u'(t), \qquad w(t) := Lu(t) - \ell u(t)$$
form an orthogonal (but generally not orthonormal) basis of $\mathfrak{g}$. 
In addition we have the formulas 
\begin{align}
u' + \ad_uu &= \delta  v \label{utadu}\\
v' + \ad_uv &= \xi w \label{vtadv}\\
w' + \ad_uw &= -\ell \delta v,\label{wtadw}
\end{align}
where $\xi := \delta^{-2} \langle u\times Lu, u\times L^2u - L(u\times Lu)\rangle$. 
\end{lemma} 

\begin{proof}
If $u'(t_0)$ were ever zero, then $u(t_0)\times Lu(t_0)$ would be zero, and $u(t_0)$ would be a steady solution of the Euler-Arnold equation, which contradicts uniqueness of solutions since $u$ is assumed nonsteady. Orthogonality of the basis is obvious from the definitions and the conservation laws. Note that $\lvert v\rvert^2 = \delta^{-1}$ and $\lvert w\vert^2 = \delta$ from the definitions and standard cross product formulas.

Equation \eqref{utadu} is obvious from the definitions. Formula \eqref{wtadw} follows from 
\begin{align*} 
w' + \ad_uw &= Lu' - \ell u' + L(u\times (Lu-\ell u) \\
&= -L(u\times Lu) -\ell u' + L(u\times Lu) = -\ell \delta v.
\end{align*}

Finally to prove \eqref{vtadv}, we first show that the left side is orthogonal to both $u$ and $v$. Since $\langle u,v\rangle = 0$, we have 
$$ 
\langle v'+\ad_uv, u\rangle = -\langle v, u'\rangle + \langle \ad_uv, u\rangle = \langle v, u\times Lu\rangle + \langle L(u\times v), u\rangle = 0,$$
by symmetry of $L$. Furthermore we have 
\begin{align*}
\langle v'+\ad_uv, v\rangle &= \frac{1}{2} \, \frac{d}{dt} \lvert v\rvert^2 + \delta^{-2} \langle L(u\times u'), u'\rangle \\
&= -\delta^{-2} \langle u', u''\rangle + \delta^{-2} \langle L(u\times u'), u'\rangle \\
&= \delta^{-2} \langle u', u\times Lu' + u'\times Lu\rangle + \delta^{-2} \langle u\times u', Lu'\rangle = 0.
\end{align*}
This shows that $v'+\ad_uv = \xi w$ for some function $\xi$. To compute it, we have 
\begin{align*}
\delta^2 \xi &= \delta\langle v'+\ad_uv, w\rangle = -\delta\langle v, w'\rangle + \delta\langle L(u\times v), w\rangle \\
&= -\langle u', Lu' - \ell u'\rangle + \langle u\times u', Lw\rangle \\
&= -\langle u',Lu'\rangle + \ell \delta + \langle w, Lw\rangle \\ 
&= -\langle u',Lu'\rangle + \ell(\lvert Lu\rvert^2 - \ell^2) + \langle Lu, L^2u\rangle - 2\ell \lvert Lu\rvert^2 + \ell^3 \\
&= -\langle u\times Lu, L(u\times Lu)\rangle + 
\langle Lu, L^2u\rangle - \ell \lvert Lu\rvert^2 \\
&= -\langle u\times Lu, L(u\times Lu)\rangle + 
\langle u\times Lu, u\times L^2u\rangle, 
\end{align*}
using $\delta = \lvert u\times Lu\rvert^2 = \lvert Lu\rvert^2 - \ell^2$ and a cross product identity. The desired formula follows.
\end{proof}

With this Lemma, the index form simplifies greatly. 
\begin{theorem}\label{3dunimodularthm}
Suppose $u(t)$ is a solution of the Euler-Arnold equation \eqref{eulerarnoldmilnor}, with $v$ and $w$ defined as in Lemma \ref{orthogonallemma}. 
Let $y(t) = p(t)u(t)+q(t)v(t)+r(t)w(t)$ for functions $p,q,r\colon [0,\tau]\to \mathbb{R}$ which vanish at both $t=0$ and $t=\tau$. Then the index form along the corresponding geodesic is given by 
\begin{equation}\label{indexmilnor}
I(y,y) = \int_0^{\tau} p'^2 + \delta (r' + \xi q)^2 + \delta^{-1} q'^2 - \ell \xi q^2 \, dt.
\end{equation}
In particular, if $\xi(t)\le 0$ for all $t$ then there are no conjugate points along the geodesic, while if both $\delta(t)$ and $\xi(t)$ are bounded below by positive constants $\delta_0$ and $\xi_0$ respectively, then there is a conjugate point no later than $\tau := 2\pi/\sqrt{\ell \xi_0\delta_0}$. 
\end{theorem}

\begin{proof}
Defining $z=y'+\ad_uy$ as usual, we compute using \eqref{utadu}--\eqref{wtadw} that 
\begin{align*}
z &= p'u + q'v + r'w + p(u'+\ad_uu) + q(v'+\ad_uv) + r(w'+\ad_uw) \\
&= p'u + \big( q' + \delta (p- \ell r)\big) v + (r' + \xi q)w.
\end{align*}
On the other hand we have 
$$ Lu\times y = p Lu\times u + q Lu\times v + r Lu \times w = pu' + q(\delta^{-1} \ell w - u) - \ell r u',$$
so that 
$$ z - Lu\times y = (p'+q) u  + q'v + (r'+(\xi-\ell\delta^{-1}) q\big) w.$$
The index form \eqref{indexform} can be expressed in this context as 
\begin{align*} 
I(y,y) &= \int_0^{\tau} \langle z, z-Lu\times y\rangle \, dt \\
&= \int_0^{\tau} p'(p'+q) + \delta^{-1} q'\big(q'+\delta (p-\ell r)\big) + 
\delta (r'+\xi q)(r'+\xi q - \delta^{-1} \ell q) \, dt \\
&= \int_0^{\tau}  p'^2 + p'q + \delta^{-1} q'^2 + q'(p-\ell r)  + \delta (r'+\xi q)^2  - 
\ell q r' - \ell \xi q^2  \, dt.
\end{align*}
Formula \eqref{indexmilnor} follows after the obvious integration by parts, since the functions all vanish at the endpoints.

Obviously $\xi(t)\le 0$ for all time makes the index form positive definite for any nontrivial variation, so there are no conjugate points. 
If we assume $\delta(t)\ge \delta_0>0$ and $\xi(t)\ge \xi_0>0$, then defining $\omega = \sqrt{\ell\delta_0\xi_0}/2$ and setting $p(t)\equiv 0$, 
the index form \eqref{indexmilnor} becomes 
$$ I(y,y) \le \int_0^{\tau} \delta (r' + \xi q)^2 + \delta_0^{-1} (q'^2 - 4\omega^2 q^2) \, dt.$$
Take a variation of the form $q(t) = c_1 \sin{(\omega t)} + c_2 \sin{(2\omega t)}$, which vanishes at $\tau=\pi/\omega$. Choose $r(t)$ so that $r'(t) = -\xi(t) q(t)$; then $r(\tau)$ is some linear combination of $c_1$ and $c_2$, and those coefficients can be chosen to make $r(\tau)$ vanish. We then get a variation for which 
$$I(y,y) \le -\frac{3\pi c_1^2 \omega}{2\delta_0} \le 0,$$
and there must be a conjugate point no later than $\tau$. 
\end{proof}

\begin{example}\label{sl2-example}
Suppose $L$ is diagonalized in an orthonormal basis $\{e_1,e_2,e_3\}$ with $L=\diag(\lambda_1,\lambda_2,\lambda_2)$ with the last two entries the same (corresponding to rotational symmetry in the $e_2$-$e_3$ plane, as in the Berger sphere). Then the Euler-Arnold equation \eqref{eulerarnoldmilnor} for $u = \sum_i u_ie_i$ becomes 
$$ u_1'(t) = 0, \qquad u_2'(t) = -\beta u_1(t)u_3(t), \qquad u_3'(t) = \beta u_1(t)u_2(t), \qquad \beta:=\lambda_1-\lambda_2.$$
By symmetry between $e_2$ and $e_3$ we may assume that $u_3(0)=0$, and the general solution is thus 
$$ u(t) = c_1 e_1 + c_2 q(t), \qquad q(t):=\cos{(\beta c_1 t)}e_2 + \sin{(\beta c_1 t)}e_3.$$
For convenience, define 
$$ \consty:=\beta c_1 c_2, \qquad r(t):= -\sin{(\beta c_1 t)}e_2 + \cos{(\beta c_1 t)} e_3.$$
Note that $u(t)$ is nonsteady if and only if $\consty \ne 0$.
We observe that 
$$ q' = \beta c_1 r, \qquad r' = -\beta c_1 q, \qquad \ad_{e_1}q = \lambda_2 r, \qquad \ad_{e_1}r = -\lambda_2 q, \qquad \ad_qr = \lambda_1 e_1.$$
The orthogonal basis from Lemma \ref{orthogonallemma} then becomes 
$$ u = c_1 e_1 + c_2 q, \qquad 
v = \consty^{-1} r, \qquad w = \consty (c_2e_1 - c_2q), \qquad c_1^2+c_2^2=1, $$
and
\begin{align*}
\delta &= |u'|^2 = |c_2q'|^2 = |c_2 \beta c_1 r|^2 = |c_2 \beta c_1|^2 |r|^2 = D^2, \quad \text{and} \\ 
\xi &= \frac{1}{\delta^2}\Big(\lambda_1 \beta^2 u_1^2\big(u_2^2 + u_3^2\big)\Big) = \frac{\lambda_1}{D^2}
\end{align*}
which are both constant in time.

 Hence for example on $SL(2,\mathbb{R})$ where we have $\lambda_2>0$ and $\lambda_1<0$, there are no conjugate points along any geodesic. 
 
 \end{example}

The same technique can be used more generally on any 
Lie group with a nondegenerate bilinear form in arbitrary dimension, but the construction is still essentially three-dimensional and produces a sufficient but not necessary condition for conjugate points.

\subsection{Homogeneous geodesics on general Lie groups}\label{steadygeneral}

We now turn to homogeneous geodesics: those for which the left-translated velocity vector $\gamma^{-1}(t)\gamma'(t) = u(t)$ is constant. In contrast to the nonhomogeneous case, where we determined sufficient conditions for conjugacy, the first result in this section gives a necessary and sufficient condition, provided that a certain Sylvester-type matrix equation involving the adjoint operator $\adstar_u$ can be solved. This is often the case, especially on  
semisimple groups, as we will see, but we also show in Example \ref{heisenberg-example} that it can work in the non-semisimple case. For such geodesics, this yields an explicit criterion for conjugacy, valid for general left-invariant metrics.

\begin{proposition}\label{steadytheorem}
Suppose $u_0$ is a steady solution of the Euler-Arnold equation \eqref{eulerarnold}, i.e., that $\adstar_{u_0}u_0=0$. Let $\adop$ and $\eulerop$ be the linear operators on the Lie algebra $\mathfrak{g}$ defined by
\begin{equation}\label{jacobi_operators}
\adop(v) := \ad_{u_0}v, \qquad \eulerop(v) = \adstar_{u_0}v + \adstar_vu_0.
\end{equation}
Suppose that there is some subspace $\subby\subset \mathfrak{g}$, such that $\adop$ and $\eulerop$ map $\subby$ to itself and there is an operator $\Syl
\colon \subby\to\subby$ satisfying the Sylvester equation \begin{equation}\label{sylvestercondition}
    \Syl \eulerop+\adop \Syl=I \quad \text{on $\subby$.}
\end{equation}  
Then there is a conjugate point along the geodesic  
if for some $\tau > 0$ we have 
\begin{equation}\label{determinantcondition}
\det{(e^{\tau \adop}\Syl e^{\tau \eulerop}-\Syl)} = 0.
\end{equation}
\end{proposition}

\begin{proof}
Let $u_0\in\mathfrak{g}$ be a steady solution of the Euler-Arnold equation \eqref{eulerarnold}, i.e. such that $\adstar_{u_0}u_0=0$. There are conjugate points along the corresponding geodesic $\gamma$ if there is a time $\tau>0$ and vector fields $y(t)$ and $z(t)$ of the Lie algebra satisfying $y(0) = y(\tau) = 0$ and the system of equations
\begin{equation}\label{jacobisystem_operators}
y'+\adop(y)=z,\qquad z'=\eulerop(z)
\end{equation}
in terms of the linear operators $\adop$ and $\eulerop$ defined in \eqref{jacobi_operators}. 
Now if there is an $\Syl$ satisfying \eqref{sylvestercondition}, then every solution of \eqref{jacobisystem_operators} with $y(0)=0$ and $z(0)=z_0\in \subby$ is obtained as
\begin{equation*}\label{steady_jacobi}
y(t)=\Syl e^{t\eulerop}z_0- e^{-t\adop}\Syl z_0,
\end{equation*} 
since the first part solves the nonhomogeneous equation for $y$, while the second part solves the homogeneous equation. A conjugate point thus occurs at $\tau>0$ for some initial condition $y'(0)\in \subby$ iff $$y(\tau) = e^{-\tau \adop} (e^{\tau \adop}\Syl e^{\tau \eulerop}-\Syl )z_0=0$$
for some $z_0\in \subby$, 
which happens if and only if the determinant of this matrix vanishes as in \eqref{determinantcondition}. 
\end{proof}

\begin{example}\label{heisenberg-example}
Consider the Heisenberg group $\mathbb{H}^3$, whose Lie algebra is spanned by $\{\heis_1, \heis_2, \heis_3\}$, having $[\heis_1, \heis_2] = \heis_3$ as the only nontrivial bracket, and declare $\heis_i$ to be orthonormal. In this case, if $u_0 = \heis_3$, then we can choose $\subby = \mathrm{span}\{\heis_1,\heis_2\}$ so that  
\begin{equation*}
\adop = \mathrm{ad}_{\heis_3} = 0 \qquad \text{and} \qquad \eulerop\vert_{\subby} = \left(
\begin{matrix}
0 & -1 \\
1 & 0
\end{matrix}
\right).
\end{equation*}
Therefore, we can take $\Syl = \eulerop\vert_{\Omega}^{-1}$, and the criterion of Theorem \ref{steadytheorem} becomes
\begin{equation*}
\det\big(\Syl e^{\tau \eulerop\vert_{\Omega}}-\Syl\big) = 2(1-\cos{\tau}), 
\end{equation*}
which means that there are conjugate points along the steady geodesic $t \mapsto \exp(t\heis_3)$,  
with the first conjugate point occurring at $t = 2\pi$. 
\end{example}

Now we consider the special case of semisimple Lie groups, for which the Sylvester equation~\eqref{sylvestercondition} can be solved, as we will see. If $G$ is a semisimple Lie group, then there is a nondegenerate Killing form $\bim(\cdot, \cdot)$ satisfying $\bim(\ad_uv, w) + \bim(v, \ad_uw) = 0$ for all $u,v,w\in \mathfrak{g}$, and any left-invariant  
Riemannian metric $\langle\cdot, \cdot\rangle$ is determined by an invertible symmetric operator $\Lambda$ on $\mathfrak{g}$ such that
\begin{equation}\label{eq_quadratic_metric}
\langle u, v \rangle = \kappa(u, \Lambda v), \qquad \text{for all } u, v \in \mathfrak{g}.
\end{equation}
In this case, the operator $\adstar$ can be written in terms of $\Lambda$ as $\adstar_uv = -\Lambda^{-1}(\ad_u\Lambda v)$, and the Euler-Arnold equation in~\eqref{eulerarnold} reduces to
\begin{equation*}\label{eulerarnoldlambda} 
\Lambda u'(t) + \ad_{u(t)}\Lambda u(t) = 0.
\end{equation*}
Thus $\gamma(t)$ is a homogeneous geodesic if and only if the velocity $u(t)=u_0$ satisfies the steady Euler-Arnold equation $$[u_0,\Lambda u_0] = 0.$$
In this case the operator $\eulerop$ from \eqref{jacobi_operators} can be written 
\begin{equation}\label{Fquadratic}
   \eulerop  
   = \Lambda^{-1}(\ad_{\Lambda u_0} - \ad_{u_0}\Lambda). 
\end{equation}

\begin{lemma}\label{sylvesterregular}
    Suppose $G$ is a semisimple Lie group with Killing form $\simpkill(\cdot,\cdot)$ and a left-invariant metric $\langle \cdot, \cdot\rangle$ generated by an invertible inertia operator $\Lambda\colon \mathfrak{g}\to\mathfrak{g}$ via \eqref{eq_quadratic_metric}. 
    Suppose $u_0$ is a semisimple regular element of $\mathfrak{g}$ and that $\Lambda$ maps the centralizer $C(u_0):=\mathrm{ker}(\ad_{u_0})$ to itself. Then $u_0$ is a steady solution of the Euler equation. If $\Lambda u_0$ is also a semisimple regular element, then $\subby:=C(u_0)^{\perp}$ satisfies the conditions of Proposition \ref{steadytheorem}, the operator $\ad_{\Lambda u_0}$ is invertible on $\Omega$, and the solution of the Sylvester equation \eqref{sylvestercondition} is given by 
    \begin{equation}\label{sylvestersoln}
        \Syl = \left(\ad_{\Lambda u_0}\right)^{-1}\Lambda. 
    \end{equation} 
    Hence there is a conjugate point if for some $\tau>0$ we have 
    \begin{equation}\label{determinantnew}
        \det{\big(e^{\tau \Lambda \eulerop\Lambda^{-1}} - e^{-\tau \adop}\big)} = 0.
    \end{equation}
\end{lemma}

\begin{proof}
    Since $G$ is semisimple, the Killing form is bi-invariant and nondegenerate. 
    Since $\Lambda$ maps $C(u_0)$ to itself, we know that $\Lambda u_0$ commutes with $u_0$, so that $u_0$ is a steady solution.
    
    We claim that $\adlambop:=\ad_{\Lambda u_0}$ maps $C(u_0)^{\perp}$ to itself, the complement being computed in either the bi-invariant metric or the Riemannian metric since the symmetric operator $\Lambda$ preserves $C(u_0)$ and hence $C(u_0)^{\perp}$. This follows from the fact that $M$ is antisymmetric under the bi-invariant Killing form, so its kernel is orthogonal to its image. Furthermore $C(\Lambda u_0) = C(u_0)$ since $u_0$ and $\Lambda u_0$ are both regular elements of the same Cartan subalgebra. Thus the kernel of $\adlambop$ is $C(u_0)$ and its image is $C(u_0)^{\perp}$. In particular $M$ is invertible on $C(u_0)^{\perp}$. 

    We have thus shown that $\adop=\ad_{u_0}$ and $\eulerop$ given by \eqref{Fquadratic} both map $\subby:=C(u_0)^{\perp}$ to itself, and the solution of the Sylvester equation \eqref{sylvestercondition} can easily be verified to be \eqref{sylvestersoln}
    using the fact that $\adop$ commutes with $\adlambop$ and thus also with $\adlambop^{-1}$. The operator in \eqref{determinantcondition} can be written in terms of  
    $$ e^{\tau \adop} \adlambop^{-1}\Lambda e^{\tau \eulerop} - \adlambop^{-1} \Lambda = \adlambop^{-1} e^{\tau \adop}  \big( \Lambda e^{\tau \eulerop} \Lambda^{-1} - e^{-\tau \adop}\big)\Lambda,$$
    and its determinant vanishes if and only if \eqref{determinantnew} holds.
\end{proof}

In the steady case, the existence of conjugate points is determined by the eigenvalues of the operator $\ad_{u_0}$ and the linearized Euler 
operator $F$ from \eqref{Fquadratic}. 
The simplest case to analyze is when $u_0$ is regular and semisimple and $\ad_{u_0}$ has a pair of purely real or purely imaginary eigenvalues (which must be negatives of each other). In either case the operators must be traceless in each $2\times 2$ eigenvector block. Hence in each such block we are dealing with matrices in $\mathfrak{sl}_2(\mathbb{R})$, and the following lemma tells us how the matrix exponential behaves on this subspace.

\begin{lemma}\label{determinantlemma}
    Let $\simpkill(\mattyone,\mattytwo)=\tfrac{1}{2}\Tr(\mattyone\mattytwo)$ denote the Killing form on $\mathfrak{sl}_2(\mathbb{R})$. Then for any distinct matrices $\mattyone,\mattytwo\in\mathfrak{sl}_2(\mathbb{R})$, the equation $$\det{(e^{t\mattyone}-e^{t\mattytwo})} = 0$$
    has a solution for $t>0$ if and only if at least one of $\simpkill(\mattyone,\mattytwo)$ or $\simpkill(\mattytwo,\mattytwo)$ is negative or, if both are nonnegative, we have
\begin{equation}\label{sl2condition}
2\sqrt{\simpkill(\mattyone,\mattytwo)\simpkill(\mattytwo,\mattytwo)} < 2\simpkill(\mattyone,\mattytwo) < \simpkill(\mattyone,\mattyone)+\simpkill(\mattytwo,\mattytwo).
    \end{equation}
\end{lemma}
 
We prove this in the Appendix. 

Now we prove our main theorem about steady solutions, showing that if $\ad_{u_0}$ has a nice eigenvalue decomposition with purely real or imaginary eigenvalues, then we can describe conjugate points in terms of linear stability (see Section~\ref{sec:background-lie-groups}), either in the purely Lagrangian sense 
(the operator $\ad_{u_0}$ having purely imaginary eigenvalues) or in the Eulerian sense 
for the steady Euler solution $u_0$. 
On a compact semisimple Lie group, $\ad_{u_0}$ would always have purely imaginary eigenvalues even though steady solutions of the Euler equation can be unstable (such as the middle axis of a nonsymmetric rigid body), but Theorem \ref{generaltheorem} already handles this even for the nonsteady case. The theorem below is therefore most useful on a noncompact Lie group, and shows that both Eulerian and Lagrangian stability are closely related to existence of conjugate points.

\begin{theorem}\label{cartan_thm}
    Let $u_0$ be a regular semisimple element of a semisimple Lie algebra.  
    Suppose that $\Lambda u_0$ is also a regular element that commutes with $u_0$, and that $\ad_{u_0}^2$ has a nonzero real eigenvalue with eigenspace $V$. Suppose in addition that $\Lambda$ maps $V$ to itself.
    
    If $\ad_{u_0}$ has purely imaginary eigenvalues on $V$, then there is a conjugate point along the homogeneous geodesic $\gamma(t) = \exp{(tu_0)}$. If $\ad_{u_0}$ has purely real eigenvalues on $V$, then there is a conjugate point along the geodesic arising from a Jacobi field with $J'(0)\in V$ if and only if $u_0$ is stable in the Eulerian sense under perturbations in $V$. 
\end{theorem}

\begin{proof}
    Since $u_0$ is regular, the centralizer $C(u_0)$ is a Cartan subalgebra of $\mathfrak{g}$. Since $u_0$ is a semisimple element, the operator $\ad_{u_0}$ is diagonalizable, and regularity implies that all its nonzero eigenvalues are distinct from each other. By the general theory of Cartan subalgebras, the eigenvalues of $\ad_{u_0}$ must occur in positive and negative pairs, so that $V$ is a two-dimensional space on which $\adop_0:=\ad_{u_0}\vert_V$ has either a pair of real nonzero eigenvalues or pure imaginary eigenvalues.

    Since $\adlambop_0:=\ad_{\Lambda u_0}\vert_V$ commutes with $\adop_0$, it maps $V$ to itself. Since $\Lambda u_0$ is also assumed regular, its centralizer is the same Cartan subalgebra. Hence we know that $\adlambop_0$ is invertible on $V$, so it has real nonzero eigenvalues if $\adop_0$ does, and imaginary nonzero eigenvalues if $\adop_0$ does. We can therefore restrict to $\subby=V$ in Proposition \ref{steadytheorem}, using the simplification in Lemma \ref{sylvesterregular}. 

    Note that by assumption $\Tr{\adop_0}=0$ on $V$, and in addition $\Tr(\adlambop_0\Lambda^{-1}) = \Tr(\Lambda^{-1}\adlambop_0)=0$ since $\Lambda^{-1}\adlambop_0$ is antisymmetric in the Euclidean inner product $\langle \cdot, \cdot\rangle$. Hence both $\adlambop_0\Lambda^{-1}$ and $\adop_0$ are in $\mathfrak{sl}_2(\mathbb{R})$ as operators on the $2$-dimensional space $V$, and Lemma \ref{determinantlemma} applies for $\mattyone=\Lambda \eulerop|_V \Lambda^{-1} = \adlambop_0\Lambda^{-1}-\adop_0$ and $\mattytwo=-\adop_0$, where $\eulerop$ is the linearized Euler operator \eqref{Fquadratic}.

    If $\adop_0$ has purely imaginary eigenvalues, then $\simpkill(\mattytwo,\mattytwo) = \simpkill(\adop_0,\adop_0) < 0$, and Lemma \ref{determinantlemma} immediately proves the existence of a conjugate point. This corresponds to purely Lagrangian stability along $V$. On the other hand if $\eulerop|_V$ has purely imaginary eigenvalues, corresponding to Eulerian stability in $V$, then  $\simpkill(\mattyone,\mattyone)=\simpkill(\eulerop|_V,\eulerop|_V)<0$. Again Lemma \ref{determinantlemma} proves existence of a conjugate point. 
      
 Meanwhile, if both $\adop_0$ and $\eulerop|_V$ have purely real eigenvalues, then 
 we can compute
 $$ \simpkill(\mattyone-\mattytwo,\mattyone-\mattytwo) = \tfrac{1}{2} \Tr{(\adlambop_0\Lambda^{-1})^2} < 0,$$
 so that condition \eqref{sl2condition} cannot be satisfied, and there cannot be conjugate points.
\end{proof}

\begin{example}\label{sl3-example}
As an application of Theorem \ref{cartan_thm}, we give an example of a left-invariant metric on $SL(3)$ for which Ricci is \emph{strictly negative} in all directions, but some geodesics still have conjugate points. The construction of the metric itself is based on \cite{leite}, which we briefly recall here. 

The Lie algebra $\mathfrak{sl}(3)$ has a natural decomposition as
\begin{equation}\label{eq_sl_brackets}
\mathfrak{sl}(3) = \mathfrak{so}(3) \oplus \mathfrak{sm}(3) \oplus \mathfrak{t}
\end{equation}
with the Killing form  
being negative-definite on $\mathfrak{so}(3)$ and positive-definite on the other two subspaces; here $\mathfrak{t}$ is the Cartan subalgebra, corresponding to traceless \emph{diagonal} matrices, and $\mathfrak{sm}(3)$ consists of symmetric traceless matrices. The bracket structure is the following:
\begin{alignat*}{5}
[\mathfrak{t}, \mathfrak{t}] &= 0, \qquad
&[\mathfrak{t}, \mathfrak{sm}(3)] &= \mathfrak{so}(3), &\qquad [\mathfrak{t}, \mathfrak{so}(3)] &= \mathfrak{sm}(3) \\ 
[\mathfrak{sm}(3), \mathfrak{sm}(3)] &= \mathfrak{so}(3),
\qquad
&[\mathfrak{sm}(3), \mathfrak{so}(3)] &= \mathfrak{sm}(3) \oplus \mathfrak{t}.&&
\end{alignat*}
Define $\Lambda : \mathfrak{sl}(3) \rightarrow \mathfrak{sl}(3)$ by
\begin{equation*} 
\Lambda|_{\mathfrak{so}(3)} = \rho \cdot \mathrm{Id}_{\mathfrak{so}(3)}, \quad
\Lambda|_{\mathfrak{sm}(3)} = \beta \cdot \mathrm{Id}_{\mathfrak{sm}(3)}, \quad
\Lambda|_{\mathfrak{t}} = \alpha \cdot \mathrm{Id}_{\mathfrak{t}}.
\end{equation*}
That is, $\Lambda$ acts as a scalar multiple of the identity in each subspace of \eqref{eq_sl_brackets}. If we choose $\rho = \frac{4}{21}$, $\beta = 1$ and $\alpha = \frac{8}{15}$, then the main theorem of \cite{leite} shows that $SL(3)$ equipped with the left-invariant metric given at the Lie algebra by
\begin{equation*}
\langle u, v \rangle = \tfrac{1}{2} \mathrm{Tr}\big((\Lambda u)\cdot v^T\big), \qquad u, v \in \mathfrak{sl}(3)
\end{equation*}
has strictly negative Ricci curvature  in every direction. Now let
\begin{equation*}
u_0 =
\left(\begin{matrix} 
1 & 0 & 0 \\
0 & 0 & 0 \\
0 & 0 & -1
\end{matrix}\right), \quad 
v_1 =
\left(\begin{matrix} 
0 & 0 & 0 \\
0 & 0 & -1 \\
0 & 1 & 0
\end{matrix}\right),
\quad
v_2 =
\left(\begin{matrix} 
0 & 0 & 0 \\
0 & 0 & 1 \\
0 & 1 & 0
\end{matrix}\right).
\end{equation*}
Note that $u_0$ is a regular element of the Lie algebra since it is diagonal, and that $\ad_{u_0}$ preserves the subspace $V := \mathrm{span}\{v_1, v_2\}$. We have
\begin{equation*}
\adop_0 = \ad_{u_0}|_V = \left(\begin{matrix} 
0 & -1 \\
-1 & 0
\end{matrix}\right), \qquad 
\mattyone = \adlambop_0\Lambda^{-1}-\adop_0 = \left(\begin{matrix} 
0 & 1-\alpha \\
1-\frac{\alpha}{\rho} & 0
\end{matrix}\right),
\end{equation*}
so that
\begin{equation*}
\bim(\mattyone,\mattyone) = 
\tfrac{1}{2} \mathrm{Tr}\big(\mattyone^2\big) = \frac{
(\alpha-1)(\alpha-\rho)}{\rho},
\end{equation*}
which is negative by our choice of $\alpha$ and $\rho$. Thus, the geodesic in direction $u_0$ has conjugate points by Theorem \ref{cartan_thm}. This shows in particular that the Gromoll-Meyer condition is not necessary for conjugate points.
\end{example}

\section{Conclusion \& Outlook}

For Lie groups where the adjoint action is uniformly bounded below in operator norm, we have obtained a complete characterization of the initial velocities $u_0 \in \mathfrak{g}$ for which the corresponding geodesic $\gamma$ develops a conjugate point: this occurs if and only if $u_0$ is not orthogonal to the commutator subalgebra $[\mathfrak{g}, \mathfrak{g}]$. At the same time, as shown by Example \ref{ex_no_conjugate}, this criterion can fail without the assumption of uniform boundedness of $\mathrm{Ad}_{\gamma(t)}$. A natural next step is to identify precisely when this uniform boundedness holds, as this might yield a full description of the conjugate locus on Lie groups in terms of purely algebraic data.

As illustrated in Example \ref{rigid-body} for the case of rigid bodies, the explicit bounds on conjugate times in Corollary \ref{easyconjugatethm} can be used to obtain bounds on the diameter for any left-invariant metric on $G$, provided one can effectively calculate certain operator norms. It would be of interest to obtain bounds for the injectivity radius as well, and to carry out similar analysis on other groups, particularly $SU(n)$ due to its connection to fluid dynamics via quantization.

Finally, one can likely weaken the assumptions of  Theorem \ref{cartan_thm} regarding homogeneous geodesics with steady Euler velocity $u_0$. For example, if $\ad_{u_0}$ has a complex eigenvalue $\alpha + i\beta$ with $\alpha, \beta \neq 0$, one could work in the four-dimensional subspace corresponding to the four eigenvalues $\pm \alpha \pm i\beta$ and obtain a similar, four-dimensional version of Lemma \ref{determinantlemma}. In addition, if we use the index form \eqref{indexform} rather than the explicit solution of the Jacobi equation, it would likely not be necessary to assume $\Lambda$ maps an eigenspace to itself. This would enable a more complete description of the connection between stability in the Eulerian and Lagrangian senses and existence of conjugate points. It would also be interesting to relate stability to conjugate points for nonsteady Euler velocities $u(t)$ describing nonhomogeneous geodesics.

\appendix
\section{Proof of Lemma \ref{determinantlemma}}

We present here the details of the proof of Lemma \ref{determinantlemma}, which involves a case by case analysis of the possible values the Killing form can take on $\mattyone,\mattytwo\in \mathfrak{sl}_2(\mathbb{R})$.

\begin{lemma} 
    Let $\simpkill(\mattyone,\mattytwo)=\tfrac{1}{2}\Tr(\mattyone\mattytwo)$ denote the Killing form on $\mathfrak{sl}_2(\mathbb{R})$. Then for any distinct matrices $\mattyone,\mattytwo\in\mathfrak{sl}_2(\mathbb{R})$, the equation $$\det{(e^{t\mattyone}-e^{t\mattytwo})} = 0$$
    has a solution for $t>0$ if and only if at least one of $\simpkill(\mattyone,\mattyone)$ or $\simpkill(\mattytwo,\mattytwo)$ is negative or, if both are nonnegative, we have
\begin{equation}\label{sl2conditionsecondtime}
2\sqrt{\simpkill(\mattyone,\mattyone)\simpkill(\mattytwo,\mattytwo)} < 2\simpkill(\mattyone,\mattytwo) < \simpkill(\mattyone,\mattyone)+\simpkill(\mattytwo,\mattytwo).
    \end{equation}
\end{lemma}

\begin{proof}
    Since $\Tr{\mattyone}=0$ we have by the Cayley-Hamilton theorem that $\mattyone^2 = -(\det{\mattyone}) I$ so that $\simpkill(\mattyone,\mattyone) = -\det{\mattyone}$, and similarly for $\mattytwo$. Let 
    $$d(t) := \det{(e^{t\mattyone}-e^{t\mattytwo})}.$$

\textbf{Case 1: $\simpkill(\mattyone,\mattyone)=a^2$, $\simpkill(\mattytwo,\mattytwo)=b^2$, $a,b>0$.} 

    In this case we must have $a\ne b$, for otherwise the strict inequality condition \eqref{sl2conditionsecondtime} cannot be satisfied.
    The matrix exponentials are given by 
    $$ e^{t\mattyone} = (\cosh{at}) I + a^{-1}(\sinh{at}) \mattyone, \qquad e^{t\mattytwo} = (\cosh{bt}) I + b^{-1}(\sinh{bt}) \mattytwo. $$
    It is then straightforward to compute that 
\begin{equation*}
    d(t) = 2\big( 1-\cosh{at}\cosh{bt} + D \sinh{at}\sinh{bt}\big), \qquad D:=\simpkill(\mattyone,\mattytwo)/(ab), 
    \end{equation*}  
If $D\le 1$, then for all $t>0$, we have 
    $$ d(t) \le 2(1-\cosh{at}\cosh{bt} + \sinh{at}\sinh{bt}) = 2\big(1 - 
    \cosh{\big((a-b)t\big)}\big) < 0. $$
    If $D\ge \tfrac{1}{2}(a^2+b^2)$ then we can compute that \begin{align*}
        d''(t) &= (2Dab-a^2-b^2) \cosh{at}\cosh{bt} + \big( (a^2+b^2)D - 2ab\big) \sinh{at}\sinh{bt} \\
        &\ge \tfrac{1}{2} (a^2-b^2)^2 \sinh{at}\sinh{bt}\ge 0,
        \end{align*}
    so $d(t)$ is positive for all $t>0$. 
    On the other hand if $1<D<\tfrac{1}{2ab}(a^2+b^2)$, then $d(0)=d'(0)=0$ while $$d''(0) = 2(2abD-a^2-b^2) < 0,$$
    so that $d(t)$ is negative for small positive $t$,
    while for large $t$ we have $d(t) \approx \tfrac{1}{2}(D-1)e^{(a+b)t}>0$. Hence there must be a change of sign for some positive $t$.

    \textbf{Case 2: $\simpkill(\mattyone,\mattyone)=-a^2$, $\simpkill(\mattytwo,\mattytwo)=b^2$, $a,b>0$.} 
    
    Then we have 
    $$d(t) = 2(1-\cos{at} \cosh{bt} + D\sin{at} \sinh{bt}), \qquad D = \simpkill(\mattyone,\mattytwo)/(ab). $$
    For $\tau:=\pi/a$, we clearly have $d(\tau)>0$ and $d(2\tau)<0$, so we get a crossing for any values of $a$ and $b$. 

    \textbf{Case 3: $\simpkill(\mattyone,\mattyone)=-a^2$, $\simpkill(\mattytwo,\mattytwo)=-b^2$, $a,b>0$.}  

    We can write
    \begin{align*}
        d(t) &= 2-2\cos{a t}\cos{bt} +2 D \sin{at}\sin{bt}, \qquad D:=\simpkill(\mattyone,\mattytwo)/(ab) \\
        &= 
        2(1+D) \sin^2{\left( \frac{a+ b}{2} \, t\right)} + 2(1-D)\sin^2{\left( \frac{ b -a}{2} \, t\right)}.
    \end{align*}
    Note that $\lvert D\rvert\ge 1$ in this case with equality if and only if $\mattytwo$ is a multiple of $\mattyone$. To see this, apply conjugation to get a canonical form for $\mattyone$, using invariance of the Killing form, to assume without loss of generality that
    $$ \mattyone = \left( \begin{matrix} 0 & r \\ 
    -r & 0 \end{matrix}\right), \qquad \mattytwo = \left(\begin{matrix} x & y+z \\ y-z & -x\end{matrix}\right)$$
    for some reals $r,x,y,z$. 
    Then we have that 
    $$ r^2=a^2, \qquad \simpkill(\mattyone,\mattytwo) = rz, \qquad 
    x^2+y^2-z^2= -b^2.$$
    Hence $$\lvert D\rvert = \lvert rz/ab\rvert = \lvert z/b\rvert \ge 1,$$
    with equality iff $x=y=0$, in which case $\mattytwo$ is a multiple of $\mattyone$.
If $a=b$ then $\lvert D\rvert > 1$ since $\mattyone\ne \mattytwo$, and we have $d(t) = 2(1+D) \sin^2{at}$ which obviously vanishes at all integer multiples of $\pi/a$. 
Otherwise we may assume without loss of generality that $b>a$, and  defining $\tau_1 = \frac{2\pi}{b+a}$ and $\tau_2 = \frac{2\pi}{b-a}$, we will have 
    $$d(\tau_1) = 2(1-D)  \qquad \text{and}\qquad d(\tau_2) = 2(1+D).$$
    Since $\lvert D\rvert > 1$, these values must have opposite signs,
    so $d(t)$ vanishes between $\tau_1$ and $\tau_2$.

\textbf{Case 4: $\simpkill(\mattyone,\mattyone)=0$, $\simpkill(\mattytwo,\mattytwo)=b^2$, $b>0$.}

In this case we have 
$$ d(t) = 2(1-\cosh{bt} + Dt \sinh{bt}), \qquad D:= \bim(\mattyone,\mattytwo)/b.$$
The same reasoning as in Case 1 shows that this can vanish if and only if $0<2D<b$, which is precisely condition \eqref{sl2conditionsecondtime}.

\textbf{Case 5: $\simpkill(\mattyone,\mattyone)=0$, $\simpkill(\mattytwo,\mattytwo)=-b^2$, $b>0$.}

In this case we have 
$$d(t) = 2(1-\cos{bt} + 2Dt\sin{bt}) = 4\sin{(\tfrac{bt}{2})} \big( Dt \cos{(\tfrac{bt}{2})} + \sin{(\tfrac{bt}{2})}\big), \qquad D:= \bim(\mattyone,\mattytwo)/b.$$
This obviously vanishes at all integer multiples of $2\pi/b$. 

\textbf{Case 6: $\simpkill(\mattyone,\mattyone)=0$, $\simpkill(\mattytwo,\mattytwo)=0$.}

In this last case we have 
$$d(t) = 2t^2 \bim(\mattyone,\mattytwo),$$
which obviously never vanishes; also the strict inequality of condition \eqref{sl2conditionsecondtime} cannot be satisfied.
\end{proof}


\begin{thebibliography}{9}

\bibitem{lietext}
M.M. Alexandrino and R.G. Bettiol. ``Lie Groups and Geometric Aspects of Isometric Actions,'' Vol. 82, Springer International Publishing, 2015.

\bibitem{AK}
V. Arnold and B. Khesin, ``Topological Methods in Hydrodynamics,'' Second edition, Appl. Math. Sci., 125 Springer, Cham, 2021. 


\bibitem{Benn}
J. Benn, \emph{Conjugate points in $D_{\mu}^s(S^2)$}, J. Geom. Phys. \textbf{170} (2021), Paper No. 104369, 14 pp.


\bibitem{Besse}
A. Besse, ``Manifolds all of Whose Geodesics are Closed,''  Ergebisse Grenzgeb. Math. 93, Springer, Berlin, 1978.

\bibitem{Chavel}
I. Chavel, \emph{A class of Riemannian homogeneous spaces}, J. Differential Geometry \textbf{4} (1970), 13--20.

\bibitem{Chavel2}
I. Chavel, \emph{Isotropic Jacobi fields and Jacobi’s equation on Riemannian homogeneous spaces}, Comment.
Math. Helv. \textbf{42} (1967), 237--248.

\bibitem{Chavel3}
I. Chavel, \emph{On normal Riemannian homogeneous spaces of rank $1$}, Bull. Amer. Math. Soc., \textbf{73} (1967), 477--481.

\bibitem{deMeyer}
L. DeMeyer, \emph{Closed geodesics in compact nilmanifolds},  Manuscripta Math. \textbf{105}(3) (2001), 283--310.  

\bibitem{doCarmo}
M. do Carmo, ``Riemannian Geometry,'' Math. Theory Appl. Birkh\"auser Boston, Inc., Boston, MA, 1992. 

\bibitem{DMSY}
T. Drivas, G. Misio\l ek, B. Shi, and T. Yoneda, \emph{Conjugate and cut points in ideal fluid motion}, Ann. Math. Qu\'ebec \textbf{46}, (2021), 1--19.

\bibitem{Eberlein}
P. Eberlein, \emph{Geometry of 2-step nilpotent groups with a left invariant metric},  Ann. Sci. \'Ecole Norm. Sup. (4) \textbf{27}(5) (1994), 611--660.

\bibitem{Eliashberg}
Y. Eliashberg and T. S. Ratiu, \emph{The diameter of the symplectomorphism group is infinite}, Invent. Math. \textbf{103}(2) (1991), 327--340.

\bibitem{FM}
A. Fomenko and A. Mishchenko, \emph{Euler equations on finite-dimensional Lie groups}, 
Math. USSR, Izv.
\textbf{12} (1978) 371–-389.

\bibitem{Frahm}
F. Frahm, \emph{\"Uber gewisse differentialgleichungen}, Math. Ann. \textbf{8} (1874) 35--44.

\bibitem{GM}
D. Gromoll and W. Meyer, \emph{On complete open manifolds of positive curvature}, Ann. of Math. (2) \textbf{90} (1969), 75--90.

\bibitem{Helgason}
S. Helgason, ``Differential geometry, Lie groups, and symmetric spaces,'' Grad. Stud. Math., 34 American Mathematical Society, Providence, RI, 2001. 

\bibitem{herring} 
J.R. Herring, ``The Jacobi Equation On Lie Groups With Left Invariant Metrics,'' dissertation, The Pennsylvania State University; 1979.8006011.

\bibitem{Knapp}
A. Knapp, ``Lie Groups Beyond an Introduction,'' Second edition Progr. Math., 140 Birkh\"auser Boston, Inc., Boston, MA, 2002. 

\bibitem{Krishnan}
A.~M. Krishnan, \emph{Diagonalizing the Ricci tensor}, J. Geom. Anal. {\bf 31} (2021), no.~6, 5638--5658.

\bibitem{lbpreston}
A. Le Brigant and S.C. Preston, Conjugate points along Kolmogorov flows on the torus. \emph{Journal of Mathematical Fluid Mechanics} 26.2 (2024): 24.

\bibitem{leepreston}
J.-M. Lee and S.C. Preston, \emph{Nonpositive curvature of the quantomorphism group and quasigeostrophic motion}, Diff. Geo. Appl. \textbf{74} (2021), 101698. 

\bibitem{LeePark}
K. Lee and K. Park, \emph{Smoothly closed geodesics in 2-step nilmanifolds}, Indiana Univ. Math. J. \textbf{45} (1996), 1--14.

\bibitem{leite}
M.~L. Leite and I.~G. Dotti~Miatello, Metrics of negative Ricci curvature on ${\rm SL}(n,\,{\bf R}),$\ $n\geq 3$, J. Differential Geometry {\bf 17}, no.~4, (1982), 635--641.

\bibitem{Manakov}
S. Manakov, \emph{Note on the integration of Euler’s equations of the dynamics of an $n$-dimensional rigid body}, Funct. Anal. Appl. \textbf{10} (1976), 328--329.

\bibitem{Mast}
M. Mast, \emph{Closed Geodesics in 2-step Nilmanifolds}, Indiana Univ. Math. J. \textbf{43} (1994), 885--911.

\bibitem{Milnor}
J. Milnor, \emph{Curvatures of left invariant metrics on Lie groups}, Adv. Math. \textbf{21}(3) (1976), 293--329.

\bibitem{M1}
G. Misio\l ek, \emph{Conjugate points in $D_{\mu}(\mathbb{T}^2)$}, Proc. Amer. Math. Soc. \textbf{124}(3) (1996), 977--982.

\bibitem{klasleandrosteve}
L. Lichtenfelz, K. Modin, and S.C. Preston, \emph{Ricci curvature for hydrodynamics on the sphere}, to appear in Comm. Math. Phys., 	arXiv:2508.09833

\bibitem{ModinViviani}
K Modin and M. Viviani, \emph{Two-dimensional fluids via matrix hydrodynamics}, arXiv:2405.14282.

\bibitem{oneillconjugate}
B. O’Neill, \emph{Submersions and geodesics}, Duke Math. J. 3\textbf{4}(1) (1967), 363--373.

\bibitem{P1}
S.C. Preston, \emph{Conjugate point criteria on the area-preserving diffeomorphism group},
J. Geom. Phys. \textbf{183} (2023), Paper No. 104680, 31 pp.

\bibitem{Ratiu}
T. Ratiu, \emph{The motion of the free $n$-dimensional rigid body}, Indiana Univ. Math. J. \textbf{29} (1980) 609–-629.

\bibitem{TauchiYoneda}
T. Tauchi and T. Yoneda, \emph{Arnold stability and Misio\l ek curvature}, Monatsh. Math. {\bf 199} (2022), no.~2, 411--429; 

\bibitem{tauchiyonedapositivity}
T. Tauchi and T. Yoneda, \emph{Positivity for the curvature of the diffeomorphism group corresponding to the incompressible Euler equation with Coriolis force}, 	Prog. Theor. Exp. Phys. {\bf 2024}, (2024) no.~3, 03A104.

\bibitem{Zeitlin}
V. Zeitlin, \emph{Self-consistent finite-mode approximations for the hydrodynamics of an incompressible fluid on nonrotating and rotating spheres}, Phys. Rev. Lett. \textbf{93} (2004), 264501.

\bibitem{Ziller}
W. Ziller, \emph{Closed geodesics on homogeneous spaces}, Math. Z. \textbf{152}(1) (1977), 67--88.

\bibitem{Ziller2}
W. Ziller, \emph{The Jacobi equation on naturally reductive compact Riemannian homogeneous spaces}, 
Comment. Math. Helv. \textbf{52} (1977), 573--590.



\end{thebibliography}
\end{document}